\newtheorem{proposition}{Proposition}
\newtheorem{lemma}{Lemma}
\newtheorem{example}{Example}
\newcommand{\R}{\ensuremath{\mathbb{R}}}
\newcommand{\Z}{\ensuremath{\mathbb{Z}}}
\newcommand{\prob}{\operatorname{\mathbf{Prob}}}
\newcommand{\diag}{\operatorname{diag}}
\newcommand{\cR}{\mathcal{R}}
\newcommand{\mr}{\ensuremath{\text{MR}}}
\newcommand{\po}{\ensuremath{\text{PO}}}
\newcommand{\dmr}{\ensuremath{\text{DMR}}}
\newcommand{\pni}{\ensuremath{\text{MRNI}}}
\newcommand{\exclude}[1]{}
\newcommand{\BD}[1]{\normalsize{\mathbf{#1}}}
\def\1{\mathbf{1}}
\def\0{\mathbf{0}}
\DeclareMathOperator*{\argmin}{arg\,min}
\newcommand{\leqnomode}{\tagsleft@true}
\newcommand{\reqnomode}{\tagsleft@false}
\newcommand{\ins}[1]{{#1}}
\newcommand{\rep}[2]{{#1}{}}
\newcommand{\del}[1]{}
\author{
Alper Atamt\"urk, Carlos Deck and Hyemin Jeon 
}
\thanks{ \noindent \hskip -5mm
	A. Atamt\"urk: Department of Industrial Engineering \& Operations Research, University of California, Berkeley, CA 94720.
	\texttt{atamturk@berkeley.edu}   \\
	C. Deck:  Department of Industrial Engineering \& Operations Research, University of California, Berkeley, CA 94720.
	\texttt{cgdeck@berkeley.edu} \\ 
	H. Jeon: Department of Industrial Engineering \& Operations Research, University of California, Berkeley, CA 94720. 
	\texttt{hyemin.jeon@berkeley.edu }
}
\title[Quadratic Upper Bound for Discrete Mean-Risk Minimization]{Successive Quadratic Upper-Bounding for Discrete Mean-Risk Minimization and Network Interdiction}
\begin{document}

\begin{abstract}
	The advances in conic optimization have led to its increased utilization for modeling data uncertainty. In particular, conic mean-risk optimization gained prominence in probabilistic and robust optimization. Whereas the corresponding conic models are solved efficiently over convex sets, their discrete counterparts are intractable. In this paper, we give a highly effective successive quadratic upper-bounding procedure for discrete mean-risk minimization problems. The procedure is based on a reformulation of the mean-risk problem through the perspective of its convex quadratic term. Computational experiments conducted on the network interdiction problem with stochastic capacities show that the proposed approach yields solutions within 1-2\% of optimality in a small fraction of the time required by exact search algorithms. We demonstrate the value of the proposed approach for constructing efficient frontiers of flow-at-risk vs. interdiction cost for 
	varying confidence levels. \\

\noindent
\textbf{Keywords:} Risk,  polymatroids, conic integer optimization, quadratic optimization, stochastic network interdiction.
\end{abstract}

\maketitle

\begin{center}
	July 2017; \ins{June 2018}
\end{center}

\BCOLReport{17.05}

\pagebreak

\section{Introduction}

Conic optimization problems arise frequently when modeling 
parametric value-at-risk (VaR) minimization, portfolio optimization, and robust optimization with ellipsoidal objective uncertainty. Although convex versions of these models are solved efficiently by polynomial interior-point algorithms, their discrete counterparts are intractable. Branch-and-bound and branch-and-cut algorithms 
require excessive computation time even for relatively small instances. The computational difficulty is exacerbated by the lack of effective warm-start procedures for conic optimization. 

In this paper, we consider a reformulation of a conic quadratic discrete mean-risk minimization problem that lends itself to a successive quadratic optimization procedure benefiting from fast warm-starts and eliminating the need to solve conic optimization problems directly. 

Let $u$ be an $n$-dimensional random vector and $x$ be an $n$-dimensional decision vector in a closed set 
$X \subseteq \R^n$.
If $u$ is normally distributed with mean $c$ and covariance $Q$, 
the minimum value-at-risk for $u'x$ at confidence level $1-\epsilon \ $, i.e.,
		\begin{align*}
		\leqnomode
		\begin{split}
		\zeta(\epsilon) = \min \ & \bigg \{ z :  \prob \left( u'x > z \right) \leq \epsilon, \ \ x \in X \bigg \} , 
		\end{split}
		\end{align*}
for $0 < \epsilon \le 0.5$, is computed by solving the mean-risk optimization problem 
\begin{align*} 
(\mr) \ \ \ \min \ & \bigg \{  c' x + \Omega \displaystyle \sqrt{x'Q x} : \ x \in X \bigg \}, 
\end{align*}
where $\Omega = \Phi^{-1}(1-\epsilon)$ and $\Phi$ is the c.d.f. of the standard normal distribution \cite{Birge:SPbook}.
If $u$ is not normally distributed, but its mean and variance are known, (\mr) yields a 
robust version by letting $\Omega = \sqrt{(1-\epsilon)/\epsilon}$, which provides an upper bound on the worst-case VaR
\cite{bertsimas.popescu:05,ghaoui.etal:03}.
Alternatively, if $u_i$'s are independent and symmetric with support $[c_i - \sigma_i, c_i + \sigma_i]$, then letting $\Omega = \sqrt{\ln(1/\epsilon)}$ with $Q_{ii} = \sigma_i^2$ gives an upper bound on \del{on} the worst-case VaR as well \cite{BN:robust-mp}. The reader is referred to
\citet{RO-book} for an in-depth treatment of robust models through conic optimization.
Hence, under various assumptions on the uncertainty of $u$, one arrives at different instances of the mean-risk model (\mr) with a conic quadratic objective.
Ahmed \cite{ahmed:06} studies the complexity and tractability of various stochastic objectives for mean-risk optimization. Maximization of the mean-risk objective is \NP-hard even for a diagonal covariance matrix \citep{AA:utility,AG:max-util}. If $X$ is a polyhedron,
(\mr) is a special case of conic quadratic optimization \citep{Alizadeh2003,Lobo1998}, which can be solved by polynomial-time interior points algorithm \citep{Alizadeh1995,Nesterov1998,BTN:ModernOptBook}. 
\citet{AG:simplex-qp} give simplex QP-based algorithms for this case. 

The interest of the current paper is in the discrete case of (\mr)
with integrality restrictions: $X \subseteq \Z^n$, which is \NP-hard. 
\citet{AN:conicmir:ipco} describe mixed-integer rounding cuts, and 
\citet{CI:cmip} give disjunctive cuts for conic mixed-integer programming.
The integral case is more predominantly addressed in the special case of independent random variables over binaries. In the absence of correlations, the covariance matrix reduces to a diagonal matrix $Q = diag(q)$, where $q$ is the vector of variances. In addition, when the decision variables are binary, (\mr) reduces to
\begin{align*} 
(\dmr) \ \ \ \min \left\{c' x + \Omega \sqrt{q' x} :  x \in X \subseteq \mathbb{B}^n \right\} \cdot
\end{align*}

\ins{Several approaches are available for (\dmr) for specific constraint sets $X$.} 
\citet{Ishii1981} give an $O(n^6)$ algorithm \del{for (\dmr)} \rep{when the feasible set $X$ is the set of}{over} spanning trees; \citet{hassin1989maximizing} utilize parametric linear programming to solve \rep{(\dmr)}{it} \rep{when $X$ defines a matroid}{over matroids} in polynomial time. 
\citet{atamturk2008polymatroids} give a cutting plane algorithm utilizing the submodularity of the objective; \citet{AJ:lifted-polymatroid} extend it to the mixed 0-1 case with indicator variables.
\citet{Atamturk2009} give an $O(n^3)$ algorithm over a cardinality constraint.
\citet{shen2003joint} provide a greedy $O(n \log n)$ 
algorithm to solve the diagonal case over the unit hypercube.
 \citet{nikolova2009strategic} gives 
a \ins{fully polynomial-time approximation scheme} (FPTAS) for an arbitrary set $X \subseteq \mathbb{B}^n$ provided the deterministic problem with $\Omega=0$ can be solved in polynomial time. 

The reformulation we give in Section~\ref{sec:qp-ub} reduces the general discrete mean-risk problem (\mr) to a sequence of discrete quadratic optimization problems, which is often more tractable than the conic quadratic case \cite{AG:m-matrix}. The uncorrelated case (\dmr) reduces to a sequence of binary linear optimization problems. Therefore, one can utilize the simplex-based algorithms with fast warm-starts for general constraint sets. Moreover, the implementations can benefit significantly for structured constraint sets, such as spanning trees, matroids, graph cuts, shortest paths, for which efficient algorithms are known for non-negative linear objectives.

\subsubsection*{A motivating application: Network interdiction with stochastic capacities}
\label{sec:PNI}

Our motivating problem for the paper is network interdiction with stochastic capacities; although, since the proposed  approach is independent of the
feasible set $X$, it can be applied to any problem with a mean-risk objective (\mr).

The deterministic interdiction problem is a generalization of the classical min-cut problem, where an interdictor with a
limited budget minimizes the maximum flow on a network by stopping the flow on a subset of the arcs at a cost per interdicted arc. Consider a graph $G = (N,A)$ with nodes $N$ and arcs $A$. 
Let $s$ be the source node and $t$ be the sink node. Let $\alpha_a$ be the cost of interdicting arc $a \in A$ and $\beta$ be the total budget for interdiction. Then, given a set $I$ of interdicted arcs, the maximum $s-t$ 
flow on the remaining arcs is the capacity of the minimum cut on the arcs $A \setminus I$. \citet{wood1993deterministic} shows
that the deterministic interdiction problem is \NP-hard and gives integer formulations for it. 
\citet{royset2007solving} give algorithms for a bi-criteria interdiction problem and generate an 
efficient frontier of maximum flow vs. interdiction cost. 
\citet{cormican1998stochastic,janjarassuk2008reformulation} consider a stochastic version of the problem,
where interdiction success is probabilistic. \citet{HHW:stoc-interdiction} develop a decomposition approach
for interdiction when network topology is stochastic. Network interdiction
is a dual counterpart of survivable network design \cite{BM:surv-cuts,RA:review}, where one installs capacity to maximize the minimum flow against an adversary blocking the arcs. See \citet{Smith2013} for a review of network interdiction models and algorithms.

\ignore{
The decision variables for (DNI) based on graph $G$ are defined as follows: 
\begin{equation*} 
\begin{aligned}
x_{a} &= \left\{ 
\begin{array}{cl}
1 & \text{  if } a \in A \text{ is within cut} \\ 
0 & \text{  otherwise,}
\end{array} \right.\\
z_{a} &= \left\{ 
\begin{array}{cl}
1 & \text{  if } a \in A \text{ is within cut but removed by interdictor} \\ 
0 & \text{  otherwise,} 
\end{array} \right. \\
y_{i} &= \left\{ 
\begin{array}{cl}
1 & \text{  if } i \in N \text{ is within source set} \\ 
0 & \text{  otherwise.}
\end{array} \right. \\
\end{aligned}
\end{equation*}
Using this set of variables, we write the \eqref{eq:problemPNI} formulation as follows: 
}

When the arc capacities are stochastic, we are interested in an optimal interdiction plan that minimizes the maximum flow-at-risk. Unlike the expectation criterion used in previous stochastic interdiction models, this approach provides a confidence level for the maximum flow on the arcs that are not interdicted.
Letting $c$ be the mean capacity vector and $Q$ the covariance matrix, the mean-risk network interdiction problem is modeled as
	\begin{align*}
	\min \quad  & c' x + \Omega \sqrt{x'Q x} \\ 
\text{s.t.} \quad	& B y \le x + z,\\ 
(\pni)	\ \ \ \ \ \quad \quad &\alpha^{\prime} z \leq \beta,  \\ 
	&y_s = 1, \ y_t =0, \\  
	&x \in \{0,1\}^A,  y \in \{0,1\}^N,  z \in \{0,1\}^A,  
	\end{align*}
\noindent where $B$ is the node-arc incidence matrix of $G$. Here, $z_a$ is one if arc $a$ is interdicted 
at a cost of $\alpha_a$ and
zero otherwise; and $x_a$ is one if arc $a$ is in the minimum mean-risk cut and zero otherwise. The optimal value of (\pni) is the ``flow-at-risk" for a given interdiction budget $\beta$.
Note that when $\Omega = 0$, (\pni) reduces to the deterministic network interdiction model of \citet{wood1993deterministic}; and, in addition, if $z$ is a vector of zeros, it reduces to the standard $s-t$ min-cut problem. \ins{In a recent paper
\citet{LSS:interdiction} give a scenario-based approach stochastic network interdiction under conditional value-at-risk measure.}
The following example underlines the difference of interdiction solutions between deterministic and mean-risk models with stochastic capacities. 

\begin{example}
Consider the simple network in Figure~\ref{fig:mrni-ex} with two arcs from $s$ to $t$. Arc 1 has mean capacity 1 and 0 variance, whereas arc 2 has mean capacity 0.9 and variance $\sigma^2$. Suppose the budget allows a single-arc interdiction. Then, the deterministic model with $\Omega=0$ would interdict arc 1 with higher mean and leave arc 2 with high variance intact. Consequently, the maximum $s-t$ flow would exceed $0.9+0.5 \sigma$  with probability 0.3085 according to the normal distribution. On the other hand, the mean-risk model with $\Omega > 0.2$,  interdicts arc 2 with lower mean, but high variance ensuring that the maximum $s-t$ flow to be no more than 1.
\end{example}
\begin{figure}[h] 
	\begin{center}
		\includegraphics[width=0.32 \linewidth]{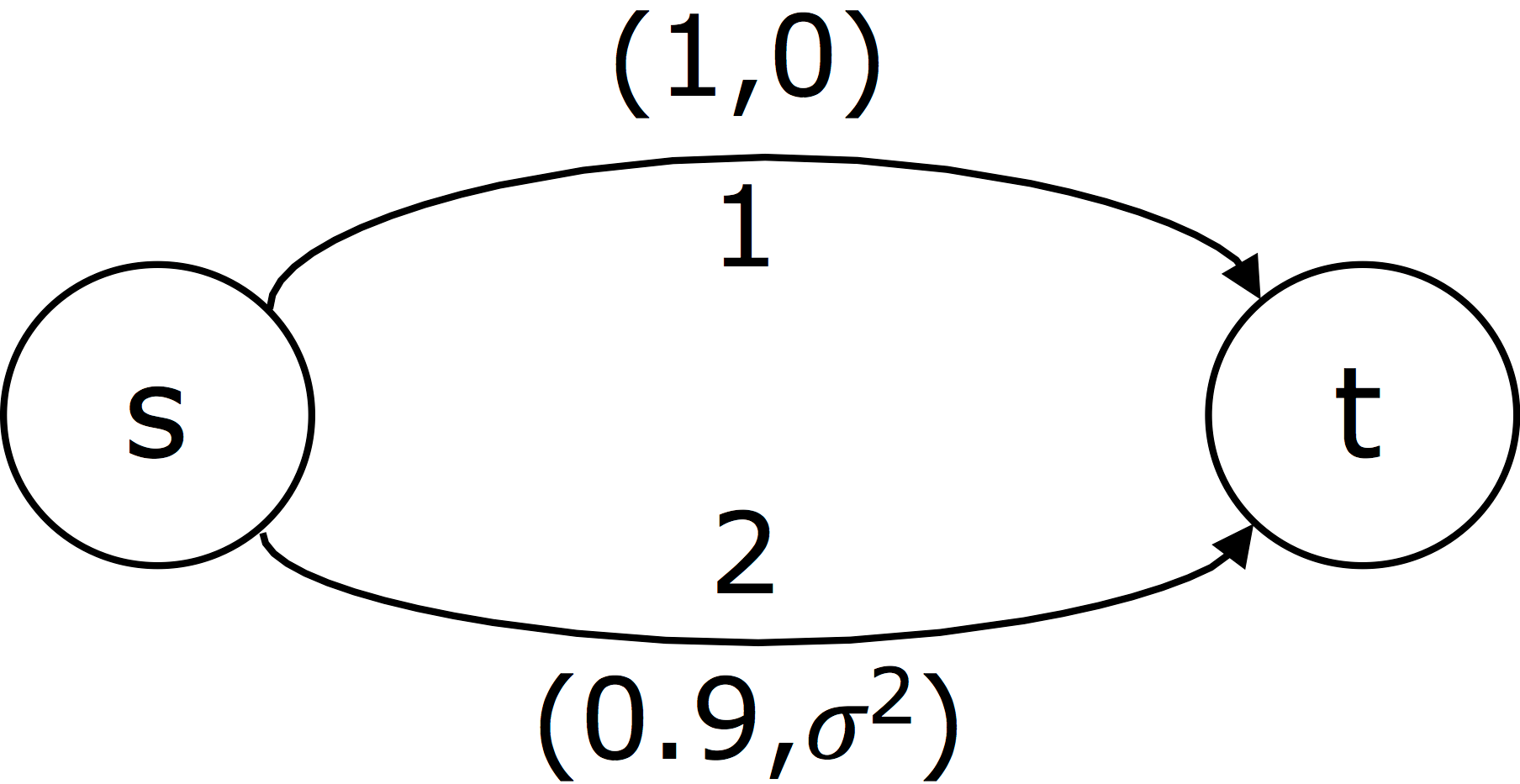}
		\caption{Mean-risk network interdiction.} 
		\label{fig:mrni-ex}
	\end{center} 
\end{figure}

\begin{figure}[h] 
	\begin{center}
		\includegraphics[width=0.65 \linewidth]{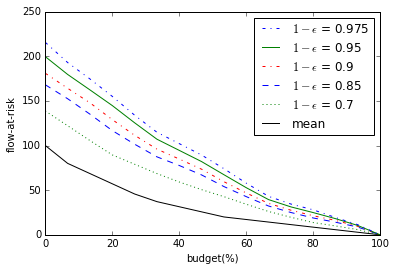}
		\caption{Flow-at-risk vs. interdiction budget for risk aversion levels.} 
		\label{fig:efficientFrontier}
	\end{center} 
\end{figure}

The combinatorial aspect of network interdiction, coupled with correlations, make it extremely challenging to determine the least cost subset of arcs to interdict for a desired confidence level in the maximum flow even for moderate sized networks. 
Yet, understanding the cost and benefit of an interdiction strategy is of critical interest for planning purposes. Toward this end, the proposed approach in the current paper allows one to quickly build efficient frontiers of flow-at-risk vs. interdiction cost, which would, otherwise, be impractical for realistic sizes. Figure \ref{fig:efficientFrontier} shows the flow-at-risk as a function of the interdiction cost for different confidence levels for a $15 \times 15$ grid graph shown in Figure~\ref{fig:structureG}. At 100\% budget the network is interdicted completely allowing no flow.
At lower budget levels the flow-at-risk increases significantly with the higher confidence levels. The vertical axis is scaled so that the deterministic min-cut value 
with $\Omega=0$ at 0\% budget (no interdiction) is 100. The green solid curve corresponding to 95\% confidence level shows that, if no interdiction is performed, the flow on the network is more than 200\% of the deterministic maximum flow with probability 0.05. \ins{The same curve shows} with 40\% interdiction budget \del{, however,} the flow is higher than the deterministic maximum flow \ins{(100)} with probability only 0.05.


\exclude{
	The warm start procedure works well in practice even for the general case where $Q \succ 0$. 
	To show this, we apply the procedure on a problem based on the Deterministic Network Interdiction (DNI) problem introduced by Wood \cite{wood1993deterministic}. 	
	The original problem can be seen as a two stage sequential game over a graph, where an interdictor first removes a fixed number of arcs in order to minimize the maximum flow a smuggler can transport afterwards through it. By using duality, Wood \cite{wood1993deterministic} showed that this problem can be formulated as a single stage optimization problem that minimizes the capacity of the minimum cut. 
	
	We consider a network interdiction problem with uncertain capacities. 
	
	In this section, we apply the warm-start procedure to an extension of the 
	Deterministic Network Interdiction problem (DNI) introduced by \citet{wood1993deterministic}. 
	The original problem can be viewed as a two-stage sequential game over a graph, where an interdictor first removes a fixed number of arcs in order to minimize the maximum flow a smuggler can transport afterwards through it. Using duality, \citet{wood1993deterministic} showed that this problem can be formulated as a single stage optimization problem that minimizes the capacity of the minimum cut. 
	
	We consider an extension of this problem by replacing the linear objective representing the cut capacity 
	with a conic quadratic mean-risk objective to incorporate uncertainty in arc capacities, 
	and refer to this problem as the Probabilistic Network Interdiction problem (PNI). 
	To the best of our knowledge, this is the first study that accounts for arc capacity uncertainty; previous works 
	that introduce uncertainty into the DNI focus primarily on the probability of success of an interdiction attempt  
	\cite{cormican1998stochastic,janjarassuk2008reformulation}.
	Also, our approach to the solution of this problem is different from these studies that applying stochastic programming techniques.
	
	In the remainder of this section, we first describe the problem formulation and the structure of the random instances generated for experiments. 
	Computational results are reported for the two cases where the conic quadratic term in the objective is diagonal and non-diagonal.  
	Finally, we demonstrate how the procedure can help creating an efficient frontier for different scenarios of the problem to aid decision makers. 
	\exclude{
		We first proceed by describing the model formulation, followed by a description of the problem instances. Afterwards, we present results for both the specific case where $Q$ is diagonal and when it is a general positive definite matrix, together with the implementation issues regarding the warm start procedure in each case. 
	}

When there is no correlation between capacities of arcs, $\Sigma$ is a diagonal matrix of variances, i.e.,
$\Sigma = \diag(\sigma_1^2,\cdots,\sigma^2_{m})$. 
The following two subsections consider two types of instances with diagonal and non-diagonal covariance matrices, respectively. 

	Given the speed improvements introduced in Sections \ref{sec:Relax} and \ref{sec:heuristic} for solving the PNI with arc capacity uncertainty, this model can be used to provide insight for decision makers. In this work, we provide the example of a decision maker that must decide the interdiction budget $b$, in the same spirit as \cite{royset2007solving}. However, unlike the previous reference, this decision can be made by considering arc capacity uncertainty. 
	
	To this end, Figure \ref{fig:efficientFrontier} plots how the cut's value-at-risk, which is the objective function in \eqref{eq:probForm}, varies as a function of the interdiction budget. This variation is measured in relative terms, by taking the quotient of the objective value for each budget level with respect to the objective value for the case where $b=0$. In this way, we can compare these curves for different risk aversion levels, which correspond to the weight of the cut's standard deviation in the objective function ($\Omega$).

With the expedited solution process through the use of polymatroid inequalities and the warm-start procedure, 
\eqref{eq:problemPNI} can provide insight for decision makers more efficiently. 
For example, solving multiple instances of \eqref{eq:problemPNI} can benefit a decision maker that must 
decide how much interdiction budget to allocate, 
in an approach similar to the efficient frontier generation discussed by   \cite{royset2007solving}. 
}

\exclude{
	From Figure \ref{fig:efficientFrontier}, we can see that for intermediate values of $\Omega$, where the cuts' expected value and weighted standard deviation are both relevant, the cut's value-at-risk falls more slowly compared to low ($\Omega = 0$) or high ($\Omega = 10$) values of $\Omega$, where the decision maker can focus on minimizing either the expected capacity or its variance, which dominate the objective function in these cases.
	
	Note also that the times required for generating the six curves amounts to 4,552 seconds, which required solving over 100 PNIs for different combinations of $\Omega$ and $b$, which is comparable to the times reported in Janjarassuk and Linderoth for solving one $20 \times 20$ instance with 200 scenarios (see Table 3 in \cite{janjarassuk2008reformulation}). 
}

\subsubsection*{Contributions and outline}
In Section~\ref{sec:qp-ub}, we give a non-convex upper-bounding function for (\mr) that matches 
the mean-risk objective value at its local minima. Then, we describe an
upper-bounding procedure that successively solves quadratic optimization problems instead of conic quadratic optimization. The rationale behind the approach is that algorithms for quadratic optimization with linear constraints scale better than interior point algorithms for conic quadratic optimization. Moreover, simplex algorithms for quadratic optimization can be effectively integrated into branch-and-bound algorithms and other iterative procedures as they allow fast warm-starts. In Section~\ref{sec:comp}, we test the effectiveness of the proposed approach on the network interdiction problem with stochastic capacities and compare it with exact algorithms. 
We conclude in Section~\ref{sec:conclusion} with a few final remarks.

\section{A successive quadratic optimization approach} \label{sec:qp-ub}

In this section, we present a successive quadratic optimization procedure to obtain feasible solutions to (\mr). 
The procedure is based on a reformulation of (\mr) using the perspective function of the convex quadratic term $q(x) = x'Q x$. 
\citet{AG:simplex-qp} introduce
\leqnomode
\begin{align*}
(\po) \ \ \ 
\min
\left\{c^{\prime} x + \frac{\Omega}{2} h(x,t)  + \frac{\Omega}{2} t: x \in X, \ t \ge 0 \right\},   
\end{align*}
\reqnomode
where \ins{$\Omega$ is a positive scalar as before,}
$h: \mathbb{R}^n \times \mathbb{R}_+ \rightarrow \mathbb{R} \cup \{\infty\}$ is the closure of the perspective function of $q$ and is defined as 
\begin{align*}
h(x,t) := 
\begin{cases}
\frac{x' Q x}{t}  & \quad \text{ if } t > 0, \\ 
0 & \quad \text{ if } t=0, \ x'Qx = 0, \\ 
+ \infty & \quad \text{ otherwise. }   
\end{cases}
\end{align*}
As the perspective of a convex function is convex \citep{hiriart2013convex}, $h$ is convex.
\citet{AG:simplex-qp} show the equivalence of (\mr) and (\po) for a polyhedral set $X$. 
Since we are mainly interested in a discrete feasible region, we study (\po) for $X \subseteq \Z^n$.

For $t \ge 0$, it is convenient to define the optimal value function
\begin{align} \label{prob:qp}
f(t) :=\underset{x \in X}{\min }\left\{ g(x,t) := c^{\prime} x + \frac{\Omega}{2} h(x,t) + \frac{\Omega}{2} t \right \} \cdot 
\end{align}
Given $t$, optimization problem \eqref{prob:qp} has a convex quadratic objective function. Let $x(t)$ be a minimizer of  \eqref{prob:qp} with value $f(t)$. Note that $g$ is convex and $f$ is a point-wise minimum of convex functions in $t$ for each choice of $x \in X$, and is, therefore, typically non-convex 
(see \ins{Figure~\ref{fig:fT}}). We show below
that, for any $t \ge 0$, $f(t)$ provides an upper bound on the mean-risk objective value for $x(t)$.

\begin{lemma} \label{lemma:grad}
	$\sqrt{a} \le \frac{1}{2} (a/t + t)$ for all $a,t \ge 0$. 
\end{lemma}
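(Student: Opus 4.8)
The plan is to recognize this as nothing more than the arithmetic mean--geometric mean inequality applied to the two nonnegative quantities $a/t$ and $t$. For $t > 0$, AM-GM gives
\[
\frac{1}{2}\left(\frac{a}{t} + t\right) \ge \sqrt{\frac{a}{t}\cdot t} = \sqrt{a},
\]
which is precisely the claim.

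Alternatively, I would argue by clearing the denominator. Since $t > 0$, multiplying the desired inequality through by $2t$ shows it is equivalent to $a + t^2 \ge 2t\sqrt{a}$, i.e.\ to $(\sqrt{a} - t)^2 \ge 0$, which holds for all real values. I would likely present this completed-square version, since it makes the equality condition transparent: equality holds exactly when $t = \sqrt{a}$. That observation is the real point of the lemma, as it is what will later let one recover the mean-risk term $\Omega\sqrt{x'Qx}$ from $\frac{\Omega}{2}h(x,t) + \frac{\Omega}{2}t$ by choosing the minimizing $t$.

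The only point requiring any care is the degenerate endpoint $t = 0$, where the ratio $a/t$ is undefined. Here I would appeal to the convention already fixed by the perspective function $h$: for $a > 0$ the right-hand side is read as $+\infty$ and the bound is vacuous, while for $a = 0$ both sides vanish, matching the middle case of $h$. Thus the inequality is genuinely an assertion about $t > 0$, and the $t=0$ boundary needs no separate estimation.

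There is no substantive obstacle in this lemma---it is an elementary one-line inequality. The only thing to watch is to phrase the statement and its subsequent use so that the $t = 0$ case stays consistent with the definition of $h$, ensuring that when the bound is invoked inside $g(x,t)$ one never inadvertently divides by zero.
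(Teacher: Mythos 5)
Your proof is correct, but it takes a different route from the paper's. The paper proves the lemma by concavity: it bounds $\sqrt{a}$ above by its tangent line at an arbitrary point $y \ge 0$,
\[
\sqrt{a} \le \sqrt{y} + \frac{1}{2\sqrt{y}}(a - y),
\]
and then substitutes $t = \sqrt{y}$ to obtain the stated bound. Your completed-square argument $(\sqrt{a}-t)^2 \ge 0$ (equivalently AM--GM on $a/t$ and $t$) is more elementary and has the advantage of making the equality condition $t = \sqrt{a}$ explicit on the spot---a fact the paper only extracts later, inside the proof of Proposition~\ref{prop:localMinima}, via the vanishing derivative of $f$. The paper's tangent-line derivation, on the other hand, exposes the conceptual source of the bound: $g(x,t)$ is exactly what you get by linearizing the concave square root, which is the viewpoint underlying the whole successive upper-bounding scheme. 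One further point in your favor: the paper states the lemma for all $a,t \ge 0$, yet both its tangent-line proof (which needs $y > 0$) and the raw expression $a/t$ break down at $t = 0$ when $a > 0$; your explicit handling of the boundary via the convention built into the perspective function $h$ is a genuine tightening of a detail the paper glosses over. Either proof serves the paper's purposes equally well in Proposition~\ref{prop:ub}.
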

\begin{proof}
	Since $\sqrt{a}$ is concave over $a \ge 0$, it is bounded above by its gradient line:
	\[
	\sqrt{a} \le \sqrt{y} + \frac{1}{2\sqrt{y}} (a-y)
	\]
	at any point $y \ge 0$. Letting $t = \sqrt{y}$ gives the result.
\end{proof}

\begin{proposition} \label{prop:ub}
	For any $t \ge 0$, we have 
	\begin{align*}
	\label{eq:propLocalMin}
	c' x(t) + \Omega \sqrt{{x(t)}' Q x(t)}  
	\le f(t).  
	\end{align*}
\end{proposition}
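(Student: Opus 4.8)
The plan is to substitute the explicit value of $f(t)$ at its minimizer into the claimed inequality and reduce everything to Lemma~\ref{lemma:grad}. Since $x(t)$ attains the minimum in \eqref{prob:qp}, we have $f(t) = c' x(t) + \frac{\Omega}{2} h(x(t),t) + \frac{\Omega}{2} t$. Cancelling the common term $c'x(t)$ from both sides of the desired bound and dividing by the positive scalar $\Omega$, the proposition becomes equivalent to the single scalar inequality
\[
\sqrt{x(t)'Q x(t)} \;\le\; \frac{1}{2}\bigl( h(x(t),t) + t \bigr),
\]
so it suffices to establish this.

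For $t > 0$, the definition of $h$ gives $h(x(t),t) = x(t)'Q x(t)/t$, and the reduced inequality is then exactly Lemma~\ref{lemma:grad} applied with $a = x(t)'Q x(t)$, which is nonnegative because $Q$ is a covariance matrix. This is the heart of the argument, and it is immediate once the lemma is in hand. For $t = 0$ I would instead argue from the piecewise definition of the closed perspective directly, since the ratio $a/t$ is no longer meaningful: if $x(t)'Q x(t) = 0$ then $h(x(t),0) = 0$ and both sides of the reduced inequality vanish, whereas if $x(t)'Q x(t) > 0$ then $h(x(t),0) = +\infty$, so the right-hand side is $+\infty$ and the bound holds trivially (indeed $f(0) = +\infty$ in this situation).

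I do not anticipate any genuine obstacle. The only point requiring a moment's care is the boundary $t = 0$, where Lemma~\ref{lemma:grad} cannot be invoked verbatim and one must fall back on the case definition of $h$; the remainder is a one-line substitution together with a direct appeal to the lemma.
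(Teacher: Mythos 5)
Your proof is correct and takes essentially the same route as the paper's: both reduce the claim to Lemma~\ref{lemma:grad} applied with $a = x(t)'Q x(t) \ge 0$, the paper proving the pointwise bound $c'x + \Omega\sqrt{x'Qx} \le g(x,t)$ for all $x \in \R^n$ and then specializing to $x(t)$, while you substitute the minimizer into $f(t)$ first and cancel. Your explicit handling of the boundary $t = 0$ through the case definition of the closed perspective $h$ is a small point of extra care that the paper glosses over, but it does not constitute a different argument.
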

\begin{proof}
	Applying Lemma~\ref{lemma:grad} with $a = x' Q x \ (\ge 0 \text{ as $Q$ is positive semidefinite})$ gives
	\[
	\sqrt{x'Q x} \le \frac{1}{2} h(x,t) + \frac{t}{2}, \  \forall x \in \R^n, \ \forall t \ge 0. 
	\]
	First multiplying both sides by $\Omega \ge 0$ and then adding $c'x$ shows
	\begin{align*}
	c' x + \Omega \sqrt{x' Q x} &\leq c' x + \frac{\Omega}{2} h(x,t) + \frac{\Omega}{2} t , \ \forall x \in \R^n, \ \forall t \ge 0.   
	\end{align*} 
	The inequality holds, in particular, for $x(t)$ as well.
\end{proof}

\begin{example}
	Consider the mean-risk optimization problem
	\[ \min \bigg \{ x_2 + \sqrt{10x_1^2 + 5 x_2^2} : x \in  X = \{(0,1), (1,0)\} \subseteq \R^2 \bigg \}
	\]
	with two feasible points.
	Figure \ref{fig:fT} illustrates the optimal value function $f$.
	The curves in red and green show $g((1,0), t)$ and $g((0,1), t)$, respectively, 
	and  $f(t) = \min \{g((1,0), t), g((0,1), t)\}$ is shown with a dotted line.   
	As the red and green curves intersect at $t = 2.5$, $x(t)$ is $(0,1)$ for $t \leq 2.5$, and $(1,0)$ for $t \ge 2.5$. 
	
	In this example, $f$ has two local minima: $1+\sqrt{5}$ attained at $t=\sqrt{5}$ and $\sqrt{10}$ at $t=\sqrt{10}$.  Observe that the upper bound $f(t)$ matches the mean-risk objective at these local minima:
	\[c' x({t}) + \Omega \sqrt{x({t})' Q x({t})}  = f({t}), \ \ {t} \in  \{\sqrt{5}, \sqrt{10}\}. \]
	The black step function shows the mean-risk values for the two feasible solutions of $X$.  
	It turns out the upper bound $f(t)$ is tight, in general, for any local minima (Proposition~\ref{prop:localMinima}).
	
	In order to contrast the convex and discrete cases, we show with
	solid blue curve the lower bound $\hat{f}$ of $f$, where $\hat f(t) = \min \{g(x,t): x \in \hat X \}$ and 
	$\hat{X} := \{(x_1, x_2) \in \R^2_+: x_1 + x_2 = 1 \}$ 	
	is the convex relaxation of $X$.
	Let $\hat{x}(t)$ be the solution of this convex problem.
	Then $\hat{f}(t)$ provides an upper bound on $c' \hat{x}(t)+ \Omega \sqrt{{\hat{x}(t)}' Q \hat{x}(t)} $ (graph shown in dotted blue curve)
	at any $t \ge 0$, and the bound is tight at $t = \sqrt{25 / 7}$, where the minimum of $\hat{f}(t)$ is attained.
	\begin{figure}[h]
		\begin{center}
			\includegraphics[scale=0.65]{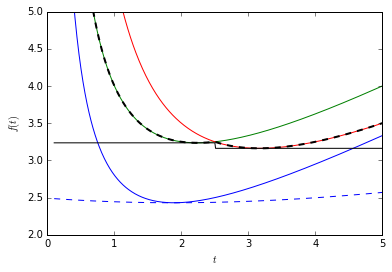}
			\caption{The value function $f$ with two discrete feasible points.}
			\label{fig:fT}
		\end{center}
	\end{figure}
\end{example}

Although, in general, $f(t)$ provides an upper bound, the next proposition shows that the mean-risk objective and $f$ match at local minima of $f$. 

\begin{proposition} \label{prop:localMinima}
	If $f$ has a local minimum at $\bar{t} > 0$, then we have
	\begin{align}
	\label{eq:propLocalMin}
	c' x(\bar{t}) + \Omega \sqrt{{x(\bar{t})}' Q x(\bar{t})}  
	= f(\bar{t}).  
	\end{align}
\end{proposition}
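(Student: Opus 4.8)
The plan is to establish the reverse of the inequality already furnished by Proposition~\ref{prop:ub}. That proposition gives $c'x(\bar t) + \Omega\sqrt{x(\bar t)'Qx(\bar t)} \le f(\bar t)$ for \emph{every} $t\ge 0$, and in particular at $\bar t$. Hence it suffices to argue that the single application of Lemma~\ref{lemma:grad} used in that proof is in fact tight at $\bar t$. The tightness condition is transparent: writing $a := x(\bar t)'Qx(\bar t)$, one has $\tfrac12(a/t+t)-\sqrt a = \tfrac{1}{2t}(t-\sqrt a)^2$, which vanishes precisely when $t=\sqrt a$. Thus the whole statement reduces to proving the stationarity relation $\bar t = \sqrt{x(\bar t)'Qx(\bar t)}$.

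To prove that relation, set $\bar x := x(\bar t)$ and $a := \bar x'Q\bar x \ge 0$. The crucial point is that $\bar x$ is a \emph{fixed} feasible point, so the univariate slice $t\mapsto g(\bar x,t)$ dominates $f$ everywhere, $f(t)\le g(\bar x,t)$ for all $t\ge 0$, while by the definition of $\bar x$ we have $f(\bar t)=g(\bar x,\bar t)$. Combining these two facts with the local minimality of $f$ at $\bar t$ gives, for all $t$ in a neighborhood of $\bar t$,
\[
g(\bar x,\bar t) = f(\bar t) \le f(t) \le g(\bar x,t).
\]
Therefore the convex function $t\mapsto g(\bar x,t) = c'\bar x + \tfrac{\Omega}{2}\big(a/t+t\big)$ has a local, hence global, minimum over $t>0$ at $\bar t$.

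It then remains to finish by a first-order computation. I would first dispose of the degenerate case $a=0$: there $g(\bar x,\cdot)$ equals $c'\bar x + \tfrac{\Omega}{2}t$, which is strictly increasing on $(0,\infty)$ and so cannot attain a minimum at the interior point $\bar t>0$, contradicting the display above; hence $a>0$. With $a>0$ the slice is differentiable at $\bar t>0$, and setting its derivative $\tfrac{\Omega}{2}\big(1-a/t^2\big)$ to zero (using $\Omega>0$) yields $\bar t^2=a$, i.e. $\bar t=\sqrt a$. This is exactly the equality case of Lemma~\ref{lemma:grad}, so retracing the chain of inequalities in the proof of Proposition~\ref{prop:ub} with $t=\bar t$ turns each one into an equality, establishing \eqref{eq:propLocalMin}.

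The main obstacle is conceptual rather than computational: the function $f$ is a non-convex pointwise minimum, so one cannot differentiate it directly. The key idea that unlocks the argument is that the global domination $f\le g(\bar x,\cdot)$ together with the pointwise match $f(\bar t)=g(\bar x,\bar t)$ transfers the local minimum of $f$ onto the smooth convex slice $g(\bar x,\cdot)$, after which the result is an elementary stationarity computation married to the equality case of Lemma~\ref{lemma:grad}. The only care needed is to exclude $a=0$, where the slice has no interior stationary point.
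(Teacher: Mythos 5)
Your proof is correct, and it reaches the conclusion by a genuinely different route than the paper's. The paper differentiates $f$ directly: it asserts that, being a pointwise minimum of differentiable convex functions, $f$ is differentiable at any local minimum in the interior of its domain, whence the vanishing derivative $f'(\bar t)=\frac{\Omega}{2}\bigl(1-x(\bar t)'Qx(\bar t)/\bar t^{\,2}\bigr)=0$ gives $\bar t=\sqrt{x(\bar t)'Qx(\bar t)}$, and substituting into $f(\bar t)$ finishes. You avoid differentiating the non-convex, potentially nonsmooth $f$ altogether: the sandwich $g(\bar x,\bar t)=f(\bar t)\le f(t)\le g(\bar x,t)$ transfers the local minimum of $f$ onto the fixed convex slice $t\mapsto g(\bar x,t)$, where stationarity is an elementary calculus exercise, and you then close via the equality case of Lemma~\ref{lemma:grad} (equivalently, by direct substitution, as the paper does; your extra appeal to Proposition~\ref{prop:ub} for the ``$\le$'' direction is harmless redundancy). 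Comparing the two: the paper's argument is shorter, but its differentiability claim is a delicate point left implicit --- it rests on the fact that a pointwise minimum can only have downward kinks, which are incompatible with a local minimum, and this needs some care when $X$ is infinite; your slice argument establishes the same key relation $\bar t=\sqrt{x(\bar t)'Qx(\bar t)}$ with no smoothness assertion about $f$ at all, and it additionally handles the degenerate case $x(\bar t)'Qx(\bar t)=0$ explicitly (showing no interior local minimum can occur there), which the paper's derivative formula glosses over. Both proofs pivot on the identical stationarity relation, so the difference is in rigor and robustness rather than in substance.
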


\begin{proof}
	Since $f$ is the point-wise minimum of differentiable convex functions, it is differentiable at its local minima in the interior of its domain ($t > 0$). Then, its vanishing derivative at $\bar t$
	\[
	f'(\bar t) = -\frac{x(\bar t)' Q x(\bar t)}{\bar t^2} + 1 = 0
	\] 
	implies $\bar t = \sqrt{x(\bar t)' Q x(\bar t)}$. Plugging this expression into $f(\bar{t})$ gives the result.
\end{proof}

\exclude{
	\begin{proof}
		Since $\bar{t}$ is a local minimum point, there exists $\delta > 0$ such that 
		$f(\bar{t}) \leq f(t) \ \forall \, t \in [\bar{t}-\delta, \bar{t}+\delta]$. Equivalently,
		\begin{align*}
		c' x(\bar{t}) + \frac{\Omega}{2\bar{t}} {x(\bar{t})}' Q x(\bar{t}) + \frac{\Omega}{2}\bar{t}  
		& \leq {\min_{x \in S} } \left\{ c' x + \frac{\Omega}{2(\bar{t} + \epsilon)} x' Q x + \frac{\Omega(\bar{t} + \epsilon)}{2} \right\}, \ \forall  \epsilon \in [-\delta, \delta]  \\ 
		& \leq c' x(\bar{t}) + \frac{\Omega}{2(\bar{t} + \epsilon)} {x(\bar{t})}' Q x(\bar{t}) + \frac{\Omega(\bar{t} + \epsilon)}{2} , \ \forall  \epsilon \in [-\delta, \delta] 
		\end{align*}
		Simplifying, the last inequality implies that
		\[
		0  \geq \frac{\epsilon}{2\bar{t}(\bar{t} + \epsilon)} {x(\bar{t})} ' Q x(\bar{t}) - \frac{\epsilon}{2}, \ \forall  \epsilon \in [-\delta, \delta].
		\]
		
		Multiplying the inequality by $2\bar{t}(\bar{t} + \epsilon)/\epsilon$ for 
		$\epsilon \neq 0$ leads to the following two inequalities depending on the sign of $\epsilon$: 
		\begin{align*}
		0 & \geq  {x(\bar{t})} ' Q x(\bar{t}) - \bar{t} (\bar{t} + \epsilon), \ \forall \epsilon \in (0, \delta], \\ 
		0 & \leq {x(\bar{t})} ' Q x(\bar{t}) -  \bar{t} (\bar{t} + \epsilon),  \ \forall \epsilon \in [-\delta, 0).   
		\end{align*}
		Taking the limit as $\epsilon \rightarrow 0+$ and $\epsilon \rightarrow 0-$, respectively, 
		we conclude that 
		\[\bar{t} = \sqrt{x(\bar{t}) ' Q x(\bar{t})}. \]
		Plugging in this expression for $\bar{t}$ into $f(\bar{t})$ establishes the result.
	\end{proof}
}

Finally, we show that problems (\mr) and (\po) are equivalent. In other words, the best upper bound matches
the optimal value of the mean-risk problem, which provides an alternative way for solving (\mr).

\begin{proposition} \label{prop:equiv}
	Problems (\mr) and (\po) are equivalent; that is,
	\begin{align*}
	{\min} \left\{c^{\prime} x + \Omega \sqrt{x^{\prime} Q x}: x \in X \right\} 
	= \min \{ f(t): t \ge 0 \} \cdot
	\end{align*}
\end{proposition}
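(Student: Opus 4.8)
The plan is to prove the two inequalities separately, writing the optimal value of $(\mr)$ for the left-hand side and $\min\{f(t):t\ge 0\}$, the optimal value of $(\po)$, for the right-hand side. The ``$\le$'' direction is immediate from Proposition~\ref{prop:ub}. For every $t\ge 0$ the minimizer $x(t)$ lies in $X$, so
\[
\min\left\{c'x + \Omega\sqrt{x'Qx} : x\in X\right\} \le c'x(t) + \Omega\sqrt{x(t)'Qx(t)} \le f(t),
\]
and minimizing the right-hand side over $t\ge 0$ delivers the first inequality with essentially no extra work.

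For the reverse ``$\ge$'' direction, I would exhibit a single value of $t$ at which $f$ is no larger than the mean-risk optimum, rather than argue through local minima. Let $x^\star$ be an optimal solution of $(\mr)$ and set $t^\star := \sqrt{(x^\star)'Qx^\star}$. The crux is to evaluate $g(x^\star,t^\star)$ and check that it coincides with the mean-risk objective at $x^\star$: when $t^\star>0$ one has $h(x^\star,t^\star) = (x^\star)'Qx^\star/t^\star = t^\star$, so the two perspective terms $\tfrac{\Omega}{2}h(x^\star,t^\star)$ and $\tfrac{\Omega}{2}t^\star$ collapse to $\Omega t^\star = \Omega\sqrt{(x^\star)'Qx^\star}$. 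Since $f(t^\star)=\min_{x\in X}g(x,t^\star)\le g(x^\star,t^\star)$, I obtain
\[
\min\{f(t):t\ge0\} \le f(t^\star) \le c'x^\star + \Omega\sqrt{(x^\star)'Qx^\star},
\]
which is exactly the mean-risk optimum. Combining the two inequalities yields the claimed equality.

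The one delicate point, and the step I would treat most carefully, is the degenerate case $t^\star=0$, i.e.\ $(x^\star)'Qx^\star=0$. Here the first branch in the definition of $h$ does not apply, and one must invoke the closure value $h(x^\star,0)=0$ so that $g(x^\star,0)=c'x^\star = c'x^\star+\Omega\sqrt{(x^\star)'Qx^\star}$; this is precisely the reason $h$ was defined as the \emph{closure} of the perspective function rather than the bare perspective, and it is what keeps the substitution $t^\star=\sqrt{(x^\star)'Qx^\star}$ valid at the boundary $t=0$. A minor accompanying remark is that the argument presumes the minimum in $(\mr)$ is attained at some $x^\star$; for discrete $X$ this holds whenever the optimum exists, and otherwise the same chain of inequalities goes through with an infimum and a minimizing sequence.
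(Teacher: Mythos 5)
Your proof is correct and follows essentially the same route as the paper: the ``$\le$'' direction is the same direct application of Proposition~\ref{prop:ub}, and your ``$\ge$'' direction, which plugs the specific point $t^\star=\sqrt{(x^\star)'Qx^\star}$ into $g$, is just a pointwise rephrasing of the paper's argument that restricts to $t=\sqrt{x'Qx}$ and then relaxes the constraint. Your explicit treatment of the degenerate case $t^\star=0$ via the closure value $h(x^\star,0)=0$ is a detail the paper leaves implicit, and it is handled correctly.
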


\begin{proof}
	Let $t^*$ be optimal for $\min \{ f(t): t \ge 0 \}$. By Proposition~\ref{prop:ub}
	\[
	f(t^*) \ge c'x(t^*) + \Omega \sqrt{x(t^*)'Qx(t^*)} \ge {\min} \left\{c^{\prime} x + \Omega \sqrt{x^{\prime} Q x}: x \in X \right\}.
	\]	
	The other direction follows from the observation 
	\begin{align*}
	\min_{x \in X} \left\{c'x + \Omega \sqrt{x'Qx} \right\} & = \min_{x \in X, \; \ins{t \ge 0}} \left\{ c'x + \frac{\Omega}{2} \text{\rep{h}{g}}(x,t) + \frac{\Omega}{2} t : t = \sqrt{x'Qx} \right\} \\
	& \geq  \min_{x \in X, \; t \ge 0} \left\{ c'x + \frac{\Omega}{2} \text{\rep{h}{g}}(x,t) + \frac{\Omega}{2} t  \right\} =  \min_{t \ge 0} \{ f(t) \} \cdot
	\end{align*} 
\end{proof}

The one-dimensional upper-bounding function $f$ above suggests a local search
algorithm that utilizes quadratic optimization to evaluate the function at any $t \ge 0$:
\[ f(t) = \min_{x \in X} \left \{ g(x,t) := c'x + \frac{\Omega}{2t} x'Qx + \frac{\Omega}{2} t \right \}\]
and avoids the solution of a conic quadratic optimization problem directly.

Algorithm~\ref{alg:bisection} describes a simple binary search method that halves the uncertainty interval $[t_{min}, t_{max}]$, initiated as $t_{min} = 0$ and $t_{max} = \sqrt{\bar x'Q \bar x}$, where $\bar x$ is an optimal solution to (\mr) with $\Omega=0$. The algorithm is terminated either when a local minimum of $f$ is reached or the gap between the upper bound $f(t)$ and $c' x(t) + \Omega \sqrt{x(t)'Q x(t)}$ is small enough. For the computations  in Section ~\ref{sec:comp} we use 1\% gap as the stopping condition.

\ignore{
	\begin{algorithm}
		\DontPrintSemicolon
		\SetAlgoNoEnd\SetAlgoNoLine%
		{\texttt{0. \hskip -3mm Initialize}} \\ \texttt{Set} $\texttt{lb} = 0$, $\texttt{ub} = u$, $t = (\texttt{lb}+\texttt{ub})/2$ \\
		\hrule
		\bigskip
		\texttt{1. \hskip -3mm Update \& Terminate} \\
		\quad \texttt{Compute $f(t)$ and $x(t)$} \; 
		\quad \uIf{$(f(t) - (c'x(t) + \Omega \sqrt{x(t)'Qx(t)} ))/f(t) \leq \Delta$ or \texttt{ub}$-\texttt{lb} \leq \Delta$}{ \texttt{Return $x(t)$.}}
		\quad \ElseIf{ $\frac{g}{\partial t}(x(t), t) \leq -\epsilon$}{$\texttt{ub} = t$ \; \texttt{Repeat step 1.}} 
		\quad \ElseIf{ $\frac{g}{\partial t}(x(t), t) \geq \epsilon$}{$\texttt{lb} = t$ \; \texttt{Repeat step 1.}} 
		\quad \Else{ \texttt{Return $x(t)$}}
		\caption{Local search procedure.}
		\label{alg:WS}
	\end{algorithm}

}

\begin{algorithm}[h]
	\caption{Binary local search.}
	\label{alg:bisection}
	\begin{algorithmic}[1]
		\renewcommand{\algorithmicrequire}{\textbf{Input:}}
		\renewcommand{\algorithmicensure}{\textbf{Output:}}
		\Require $X \subseteq \Z^n; Q\text{ p.s.d. matrix; }c\text{ cost vector; } \Omega>0$
		\Ensure Local optimal solution $x$
		\State \textbf{Initialize }$t_{\min}$ and $t_{\max}$ 
		\State $\hat{z}\leftarrow \infty$ \Comment{best objective value found}
		\Repeat
		\State $t\leftarrow \frac{t_{\min}+t_{\max}}{2}$
		\State $x(t)\leftarrow \argmin\left\{c'x+\frac{\Omega}{2t}x'Qx+\frac{\Omega}{2}t: x \in X\right\}$ \ \ 
		\If{$\frac{\partial g}{\partial t}(x(t), t) \leq -\epsilon$}\label{line:iBisection10} 
		\State $t_{\min}\leftarrow t$\label{line:iBisection11}
		\ElsIf{ $\frac{\partial g}{\partial t}(x(t), t) \geq \epsilon$} \label{line:iBisection20}
		\State $t_{\max}\leftarrow t$\label{line:iBisection21}
		\Else \label{line:iBisection20}
		\State {return $x(t)$}
		\EndIf \label{line:iBisection3}
		\Until stopping condition is met \label{line:stoppingCriterion2}
		\State \Return $\hat{x}$
	\end{algorithmic}
\end{algorithm}

\subsubsection*{The uncorrelated case over binaries}
The reformulation (\po) simplifies significantly for the special case of independent random variables over binaries. In the absence of correlations, the covariance matrix reduces to a diagonal matrix $Q = diag(q)$, where $q$ is the vector of variances. For 
\begin{align*} 
(\dmr) \ \ \ \min \left\{c' x + \Omega \sqrt{q' x} :  x \in X \subseteq \mathbb{B}^n \right\} 
\end{align*}
the upper bounding problem simplifies to
\begin{align} \label{prob:ub-d}
f(t) = \min \left\{c' x + \frac{\Omega}{t} q' x  + \frac{\Omega}{2} t:  x \in X \subseteq \mathbb{B}^n \right\},
\end{align}
which is a binary linear optimization problem for fixed $t$. Thus, $f$ can be evaluated fast for \del{many} \ins{linear} combinatorial optimization problems, \ins{such as the  minimum spanning tree problem, shortest path problem, assignment problem, minimum cut problem \cite{book:S:co},} for which there exist polynomial-time algorithms. Even when the evaluation problem 
\eqref{prob:ub-d} is \NP-hard, simplex-based branch-and-bound algorithms equipped with warm-starts perform much faster than conic quadratic mean-risk minimization as demonstrated in the next section.

\section{Computational Experiments}
\label{sec:comp}

In this section we report on computational experiments conducted to test the effectiveness of the
proposed successive quadratic optimization approach on the network interdiction problem with stochastic capacities. 
We compare the solution quality and the computation time with exact algorithms.

All experiments are carried out using CPLEX 12.6.2 solver on a workstation 
with a 3.60 GHz Intel R Xeon R CPU E5-1650 and 32 GB main memory and with a single thread. 
Default CPLEX settings are used with few exceptions: dynamic search and presolver are disabled to utilize the user cut callback; the branch-and-bound nodes are solved using linear outer approximation for faster enumeration; and the time limit is set to one hour.  

\subsubsection*{Problem instances} 
\label{subsec:inst}
We generate \del{network interdiction} instances \ins{of the mean-risk network interdiction problem (\pni)}
on grid graphs similar to the ones used in
\citet{cormican1998stochastic,janjarassuk2008reformulation}. 
Let \textit{ $p \times q$ grid} be the graph with $p$ columns and $q$ rows of grid nodes in addition to a source and a sink node (see Figure \ref{fig:structureG}). 
The source and sink nodes are connected to all the nodes in the first and last column, respectively. 
The arcs incident to source or sink have infinite capacity and are not interdictable.  
The arcs between adjacent columns are always directed toward the sink, 
and the arcs connecting two nodes within 
the same column are directed either upward or downward with equal probability. 

\begin{figure}[h]
\begin{center}
\includegraphics[width=0.55 \linewidth]{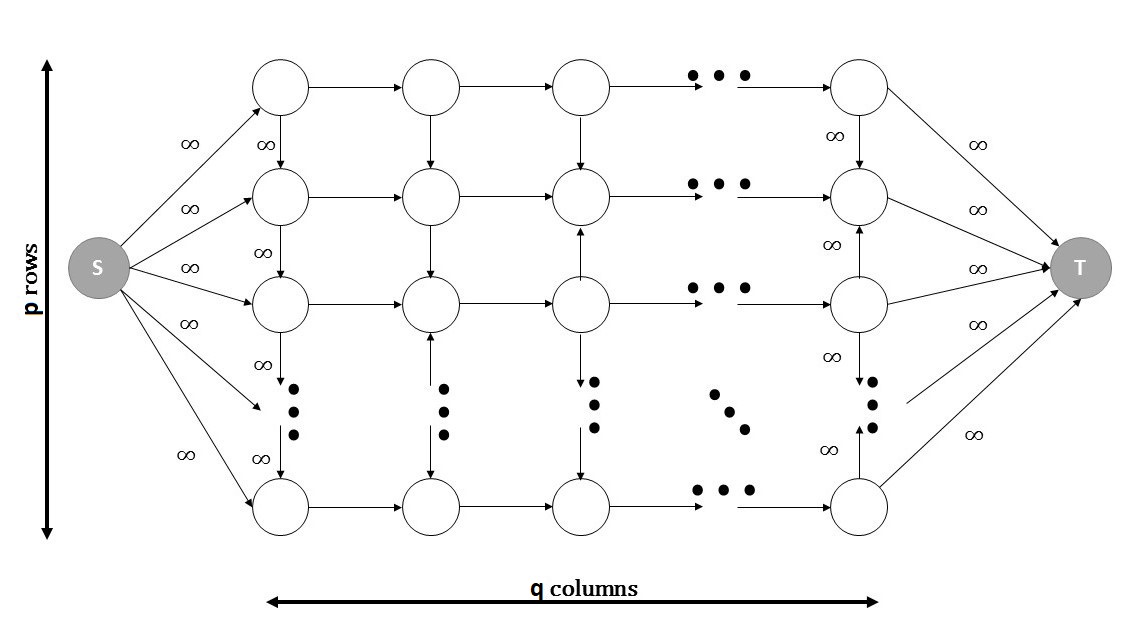}
\caption{\del{Structure of }$p \times q$ grid graph.}
\label{fig:structureG}
\end{center}
\end{figure}

We generate two types of data: uncorrelated and correlated.
For each arc $a \in A$, the mean capacity $c_a$ 
and its standard deviation $\sigma_a$ are independently drawn from the integral uniform $[1, 10]$, 
and the interdiction cost $\alpha_a$ is drawn from the integral uniform $[1, 3]$. 
For the correlated case, the covariance matrix is constructed via a factor model: $Q = \diag(\sigma_1^2,\cdots,\sigma^2_{|A|})+ E F E' $, 
where $F$ is an $m \times m$ factor covariance matrix and $E$ is the exposure matrix of the arcs to the factors. 
$F$ is computed as $F = HH'$, where each $H_{ij}$ is drawn from uniform $[-100/pq, 100/pq]$, and each $E_{ij}$ 
from uniform $[0, 0.1]$ with probability 0.2 and set to 0 with probability 0.8. 
The interdiction budget $\beta$ is set to $\lceil \frac{Y}{2} \rceil$, 
and the risk averseness parameter $\Omega$ is set to $\Phi^{-1}(1-\epsilon)$, where $\Phi$ is the c.d.f. of the standard normal distribution. 
Five instances are generated for each combination of graph sizes $p \times q$ : $10\times 10$, $20 \times 20$, $30 \times 30$
and confidence levels $1-\epsilon$: 0.9, 0.95, 0.975. The data set is available
for download at \texttt{http://ieor.berkeley.edu/$\sim$atamturk/data/prob.interdiction} .

\ins{For completeness, we state the corresponding perspective optimization for (\pni):
\begin{align*}
\min \quad  & c' x + \Omega x'Q x/2t + \Omega t/2 \\ 
\text{s.t.} \quad	& B y \le x + z,\\ 
(\text{PO}-\pni)	\ \ \ \ \ \quad \quad &\alpha^{\prime} z \leq \beta,  \\ 
&y_s = 1, \ y_t =0, \\  
&x \in \{0,1\}^A,  y \in \{0,1\}^N,  z \in \{0,1\}^A,  t \in \R_+.
\end{align*}
}

\subsubsection*{Computations} 
Table~\ref{tab:ws} summarizes the performance of the successive quadratic optimization approach on the network interdiction instances. 
We present the number of iterations, the computation time in seconds, and the percentage optimality gap for the solutions, separately for the uncorrelated and correlated instances.  Each row represents the average over five instances for varying grid sizes and confidence levels. One sees in the table that  only a few number of iterations are required to obtain solutions within about 1\% of optimality for both the correlated and uncorrelated instances.
While the solution times for the correlated case are higher, even the largest instances are solved under 20 seconds on average. 
The computation time increases with the size of the grids, but is not affected by the confidence level $1-\epsilon$.

\begin{table}[h]
	\caption{Performance of the binary local search.}
\label{tab:ws}
	\footnotesize
	\centering
	\setlength{\tabcolsep}{3pt} 
	\begin{tabular}{c|c|rrr|rrr}
		\hline \hline
		\multicolumn{2}{c|}{} & \multicolumn{3}{c|}{ Uncorrelated } & \multicolumn{3}{c}{Correlated}  \\
		\hline
		$p \times q$ & $1-\epsilon$ & iter & time & gap & iter & time & gap \\ 
		\hline
		\multirow{3}{*} {$10 \times 10$} 
		& 0.9  & 2.8 & 0.04 & 1.47 & 3.0 & 0.10 & 0.68 \\
		& 0.95 & 3.4 & 0.05 & 0.28 & 3.0 & 0.11 & 0.74 \\
		& 0.975 & 2.8 & 0.05 & 0.00 & 3.0 & 0.09 & 0.73 \\
		\hline
		\multirow{3}{*} {$20 \times 20$} 
		& 0.9  & 3.0 & 0.49 & 1.54 & 4.0 & 2.74 & 0.35 \\
		& 0.95 & 2.8 & 0.36 & 1.06 & 4.0 & 2.68 & 0.44 \\
		& 0.975 & 2.8 & 0.45 & 1.07 & 4.0 & 3.21 & 5.86 \\
		
		\hline
		\multirow{3}{*}{$30 \times 30$} 
		& 0.9  & 3.0 & 2.24 & 1.67 & 5.0 & 16.00 & 0.46 \\
		& 0.95 & 3.0 & 2.54 & 1.26 & 5.8 & 16.87 & 0.15 \\
		& 0.975 & 3.0 & 2.58 & 1.19 & 5.8 & 18.57 & 0.20 \\
		\hline
		\multicolumn{2}{c|}{\textbf{avg} \rule{0pt}{10pt}} 
		& $\BD{2.96}$ & $\BD{0.98}$ & $\BD{1.06}$ & $\BD{4.18}$ & $\BD{6.71}$ & $\BD{1.07}$ \\ 
		\hline \hline
	\end{tabular}
\end{table}

\begin{table}[t]
	\footnotesize
	\centering
	\caption{Performance of b\&b and b\&c algorithms.}
	\label{tb:diag5}
	\setlength{\tabcolsep}{2pt} 
	\begin{tabular}{c|c|rrrrr|rrrrrr}
		\hline \hline
			\multicolumn{2}{c}{}  \rule{0pt}{10pt} 	& \multicolumn{11}{|c}{ Uncorrelated instances }  \\ \hline
		\multicolumn{2}{c}{} & \multicolumn{5}{|c|}{ Cplex } & \multicolumn{6}{c}{Cplex $+$ cuts} \\ 
		\hline
		$p \times q$ & $1-\epsilon$ & rgap & stime & time & egap (\#) & nodes  & cuts & rgap  & stime & time & egap (\#) & nodes  \\ 
		\hline
		\multirow{3}{*} {$10 \times 10$} 
		& 0.9   & 15.1 & 0 & 1 & 0.0\phantom{(0)} & 457   & 101 & 5.1 & 0 & 3 & 0.0\phantom{(0)} & 11   \\
		& 0.95  & 17.0 & 1 & 1 & 0.0\phantom{(0)} & 1,190 & 127 & 5.6 & 2 & 4 & 0.0\phantom{(0)} & 75  \\ 
		& 0.975 & 17.9 & 1 & 2 & 0.0\phantom{(0)} & 1,194 & 137 & 6.0 & 3 & 4 & 0.0\phantom{(0)} & 73  \\ 
		\hline
		\multirow{3}{*} {$20 \times 20$} 
		& 0.9   & 17.9 &  66 & 169 & 0.0\phantom{(0)}  & 23,093  & 463 & 10.2 & 23 &  44 & 0.0\phantom{(0)} &    602  \\ 
		& 0.95  & 20.0 & 469 & 676 & 0.0\phantom{(0)}  & 56,937  & 579 & 11.4 & 48 & 102 & 0.0\phantom{(0)} &  4,850   \\ 
		& 0.975 & 21.6 & 404 & 1,365 & 0.5(1)          & 91,786  & 621 & 12.5 & 79 & 262 & 0.0\phantom{(0)} & 16,883  \\ 
		\hline
		\multirow{3}{*}{$30 \times 30$} 
		& 0.9   & 19.1 & 2,338 & 3,258 &  4.6(4) & 65,475  & 680 & 12.6 & 666 &   838 & 0.0\phantom{(0)} & 11,171  \\ 
		& 0.95  & 21.3 & 3,315 & 3,600 & 10.3(5) & 61,754 & 752 & 14.3 & 850 & 1,313 & 0.0\phantom{(0)} & 22,420  \\ 
		& 0.975 & 23.1 & 3,535 & 3,600 & 15.3(5) & 67,951 & 767 & 15.9 & 1,973 & 2,315 & 1.6(2)           & 35,407  \\ 
		\hline
		\multicolumn{2}{c|}{\textbf{avg}  \rule{0pt}{10pt} } 
		& $\BD{19.2}$ & $\BD{1,125}$ & $\BD{1,408}$ & $\BD{3.4(15)}$ & $\BD{41,093}$  & $\BD{470}$ & $\BD{10.4}$  & $\BD{404}$ &  $\BD{543}$  & $\BD{0.2(2)}$ & $\BD{10,166}$  \\ 
		\hline
					\multicolumn{2}{c}{}  \rule{0pt}{10pt} 	& \multicolumn{11}{|c}{ Correlated instances }  \\ \hline
		\multicolumn{2}{c}{} & \multicolumn{5}{|c|}{ Cplex } & \multicolumn{6}{c}{Cplex $+$ cuts} \\ 
		\hline
		$p \times q$ & $1-\epsilon$ & rgap  & stime & time & egap (\#) & nodes & cuts & rgap  & stime & time & egap (\#) & nodes  \\ 
		\hline
		\multirow{3}{*} {$10 \times 10$} 
		& 0.9   & 10.5 & 2 & 4 & 0.0\phantom{(0)} & 268   & 114 &  5.8 & 4 & 7 & 0.0\phantom{(0)} & 14   \\ 
		& 0.95  & 14.5 & 1 & 2 & 0.0\phantom{(0)} & 727   & 126 &  8.0 & 2 & 6 & 0.0\phantom{(0)} & 44   \\ 
		& 0.975 & 16.2 & 2 & 2 & 0.0\phantom{(0)} & 1,105 & 120 & 10.3 & 2 & 5 & 0.0\phantom{(0)} & 67   \\ 
		\hline
		\multirow{3}{*} {$20 \times 20$} 
		& 0.9   & 15.0 & 49  & 92  & 0.0\phantom{(0)} & 11,783 & 341 & 12.0 & 26 &  31 & 0.0\phantom{(0)} & 1,199   \\ 
		& 0.95  & 16.9 & 75  & 314 & 0.0\phantom{(0)} & 30,536 & 400 & 13.8 & 48 &  81 & 0.0\phantom{(0)} & 3,567   \\ 
		& 0.975 & 18.2 & 802 & 615 & 0.0\phantom{(0)} & 66,759 & 420 & 15.1 & 66 & 129 & 0.0\phantom{(0)} & 6,911  \\ 
		\hline
		\multirow{3}{*}{$30 \times 30$} 
		& 0.9   & 12.1 &  427  & 873   & 0.0\phantom{(0)} & 21,748 & 343 &  9.3 & 130 & 246 & 0.0\phantom{(0)} &  4,325   \\ 
		& 0.95  & 13.3 &  527  & 1,436 & 0.0\phantom{(0)} & 37,448 & 420 & 10.3 & 249 & 295 & 0.0\phantom{(0)} &  4,559   \\ 
		& 0.975 & 13.8 & 1,776 & 2,465 & 0.4(1)           & 59,202 & 529 & 10.8 & 673 & 810 & 0.0\phantom{(0)} & 12,093   \\ 
		\hline
		\multicolumn{2}{c|}{\textbf{avg}  \rule{0pt}{10pt} } 
		&$\BD{14.5}$ & $\BD{386}$ & $\BD{666}$ & $\BD{0.1(1)}$ & $\BD{25,509}$  &  $\BD{313}$ & $\BD{10.6}$ & $\BD{133}$ & $\BD{179}$ & $\BD{0.0\phantom{0)}}$ & $\BD{3,642}$ \\ 
		\hline \hline
	\end{tabular}
\end{table}

\ignore{
\begin{table}[h]
	\footnotesize
	\centering
	\caption{Computations with the correlated case.}
	\label{tb:corr6}
	\setlength{\tabcolsep}{2pt} 
	\begin{tabular}{c|c|rrrrr|rrrrrr}
		\hline \hline
		\multicolumn{2}{c}{} & \multicolumn{5}{|c|}{ Cplex } & \multicolumn{6}{c}{Cplex $+$ cuts} \\ 
		\hline
		$p \times q$ & $1-\epsilon$ & rgap  & stime & time & egap (\#) & nodes & cuts & rgap  & stime & time & egap (\#) & nodes  \\ 
		\hline
		\multirow{3}{*} {$10 \times 10$} 
		& 0.9   & 10.5 & 2 & 4 & 0.0\phantom{(0)} & 268   & 114 &  5.8 & 4 & 7 & 0.0\phantom{(0)} & 14   \\ 
		& 0.95  & 14.5 & 1 & 2 & 0.0\phantom{(0)} & 727   & 126 &  8.0 & 2 & 6 & 0.0\phantom{(0)} & 44   \\ 
		& 0.975 & 16.2 & 2 & 2 & 0.0\phantom{(0)} & 1,105 & 120 & 10.3 & 2 & 5 & 0.0\phantom{(0)} & 67   \\ 
		\hline
		\multirow{3}{*} {$20 \times 20$} 
		& 0.9   & 15.0 & 49  & 92  & 0.0\phantom{(0)} & 11,783 & 341 & 12.0 & 26 &  31 & 0.0\phantom{(0)} & 1,199   \\ 
		& 0.95  & 16.9 & 75  & 314 & 0.0\phantom{(0)} & 30,536 & 400 & 13.8 & 48 &  81 & 0.0\phantom{(0)} & 3,567   \\ 
		& 0.975 & 18.2 & 802 & 615 & 0.0\phantom{(0)} & 66,759 & 420 & 15.1 & 66 & 129 & 0.0\phantom{(0)} & 6,911  \\ 
		\hline
		\multirow{3}{*}{$30 \times 30$} 
		& 0.9   & 12.1 &  427  & 873   & 0.0\phantom{(0)} & 21,748 & 343 &  9.3 & 130 & 246 & 0.0\phantom{(0)} &  4,325   \\ 
		& 0.95  & 13.3 &  527  & 1,436 & 0.0\phantom{(0)} & 37,448 & 420 & 10.3 & 249 & 295 & 0.0\phantom{(0)} &  4,559   \\ 
		& 0.975 & 13.8 & 1,776 & 2,465 & 0.4(1)           & 59,202 & 529 & 10.8 & 673 & 810 & 0.0\phantom{(0)} & 12,093   \\ 
		\hline
		\multicolumn{2}{c|}{\textbf{avg}  \rule{0pt}{10pt} } 
		&$\BD{14.5}$ & $\BD{386}$ & $\BD{666}$ & $\BD{0.1(1)}$ & $\BD{25,509}$  &  $\BD{313}$ & $\BD{10.6}$ & $\BD{133}$ & $\BD{179}$ & $\BD{0.0\phantom{0)}}$ & $\BD{3,642}$ \\ 
		\hline \hline
	\end{tabular}
\end{table}

}

The optimal/best known objective values used for computing the optimality gaps in Table~\ref{tab:ws} are obtained with the CPLEX \del{exact} branch-and-bound algorithm. 
To provide a comparison with the successive quadratic optimization procedure, we summarize the performance for the exact algorithm in Table~\ref{tb:diag5}, for the uncorrelated and correlated instances, respectively. 
In each column, we report the percentage integrality gap at the root node (rgap), 
the time spent until the best feasible solution is obtained (stime),
the total solution time in CPU seconds (time), 
the percentage gap between the best upper bound and the lower bound at termination (egap), 
and the number of nodes explored (nodes).  
If the time limit is reached before proving optimality, the number of instances unsolved (\#) is shown next to egap. Each row of the tables represents the average for five instances. 

\ignore{
\newpage 

$\left . \right .$

\newpage
}

Observe that the solution times with the CPLEX branch-and-bound algorithm are much larger compared to the successive quadratic optimization approach: 1,408 secs. vs. 1 sec. for the uncorrelated instances and 666 secs. vs. 7 secs. for the correlated instances. The difference in the performance is especially striking for the 30 $\times$ 30 instances, of which half are not solved to optimality within the time limit. Many of these unsolved instances are terminated with large optimality gaps (egap).

\ins{In order to strengthen the convex relaxation of 0-1 problems with a mean-risk objective, one can utilize the polymatroid inequalities \cite{AG:poly}. Polymatroid inequalities exploit the submodularity of the mean-risk objective for the diagonal case. They are extended for the (non-digonal) correlated case as well as for mixed 0-1 problems in \cite{atamturk2008polymatroids}. }   
\del{In order} To improve the performance of the exact algorithm, we also test it by adding \ins{the} polymatroid cuts.
\del{ for mean-risk 0-1 optimization problems}  
It is clear in Table~\ref{tb:diag5} that the polymatroid cuts have a very positive impact on the exact algorithm. The root gaps are reduced significantly with the addition of the polymatroid cuts.
Whereas 16 of the instances are unsolved within the time limit with default CPLEX, all but two instances are solved to optimality when adding the cuts. Nevertheless, the solution times even with the cutting planes are much larger compared to the successive quadratic optimization approach: 543 secs. vs. 1 sec. for the uncorrelated case 
and 179 secs. vs. 7 secs. for the correlated case.


\ignore{
The optimal values used for computing the optimality gaps in Table~\ref{tab:ws} are obtained with an exact branch-and-cut algorithm that employs polymatroid cuts for mean-risk 0-1 optimization problems \cite{AG:poly,atamturk2008polymatroids}. To provide a comparison with the
proposed successive quadratic optimization procedure, we summarize the performance for the exact algorithm in Tables~\ref{tb:diag5}~and~\ref{tb:corr6}, for the uncorrelated and correlated instances, respectively. In these tables
we present the results for the
default CPLEX branch-and-bound algorithm and a branch-and-cut algorithm with polymatroid inequalities.
In each column, we report the percentage integrality gap at the root node (rgap), 
the time spent until the best feasible solution is obtained (stime),
the total solution time in CPU seconds (time), 
the percentage gap between the best upper bound and the lower bound at termination (egap), 
and the number of nodes explored (nodes).  
If the time limit is reached before proving optimality, the number of instances unsolved (\#) is shown next to egap. 
For the branch-and-cut algorithm, the number of polymatroid cuts added to the formulation (cuts) is reported as well. 
Each row of the tables represents the average for five instances. 

It is clear in Tables~\ref{tb:diag5}~and~\ref{tb:corr6} that the polymatroid cuts have a very positive impact on the exact algorithm. The root gaps are reduced significantly with the addition of the polymatroid cuts.
Whereas 16 of the instances are not solved within the time limit with default CPLEX, all but two instances are solved to optimality when adding the polymatroid cuts. Nevertheless, the solution times even with the cutting planes are much larger compared to the successive quadratic optimization approach: 543 secs. vs. 1 sec. for the uncorrelated case 
and 179 secs. vs. 7 secs. for the correlated case. Note that the 
correlated instances have lower integrality gap, and hence are solved faster with the exact search algorithms. 

}

Branch-and-bound and branch-and-cut algorithms spend a significant amount of solution time to prove optimality rather than finding feasible solutions. Therefore, for a fairer comparison, it is also of interest to check the time to the best feasible solution, which are reported under the column stime in Table~\ref{tb:diag5}. The average time to the best solution is 1,125 and 386 seconds for the branch-and-bound algorithm and 404 and 133 seconds for the branch-and-cut algorithm for the uncorrelated and correlated cases, respectively.  
Figure~\ref{fig:pp} presents the progress of the incumbent solution over time
for one of the $30 \times 30$ instances. 
The vertical axis shows the distance to the optimal value (100\%). The binary search algorithm finds a solution within 3\% of the optimal under 3 seconds. It takes 1,654 seconds for the \del{CPLEX} default branch-and-bound algorithm and 338 seconds for the branch-and-cut algorithm to find a solution at least as good.

\ignore{
In Figure~\ref{fig:pp}, we present the performance profile of time to the best solution for the algorithms considered. An $(x,y)$ point in the plots shows the number of instances $y$ that required no longer than $x$ seconds to find the best solution.
49 out of 90 instances are solved to optimality by the binary local search algorithm. The average optimality gap for the remaining 41 instances is 2.33\%. Also observe that 74 instances are solved to optimality by the default CPLEX branch-and-bound algorithm within the one hour time limit. 
Better feasible solutions are found faster by the branch-and-cut algorithm. 
}

\begin{figure}[h!]
	\begin{center}
		\includegraphics[width=0.65 \linewidth]{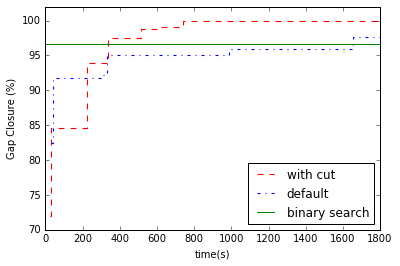}
		\caption{Performance profile of the algorithms.}
		\label{fig:pp}
	\end{center}
\end{figure}

\ins{
The next set of experiments are done to test the impact of the budget constraint on the performance of the algorithms. For these experiments, the instances with $1-\epsilon = 0.95$ and grid size $20 \times 20$ are solved with varying levels of budgets.  Specifically, the budget parameter 
$\beta$ is set to $\frac{\bar{\alpha}Y}{\eta}$ for $\eta \in \{2, 4, 6, 8, 10, 20\}$, 
where $\bar{\alpha}$ denotes the mean value of $\alpha_a$. 
As before, each row of the Tables \ref{tb:ws_budget} -- \ref{tb:uncorr_budget} presents the averages for five instances.
Observe that the binary search algorithm is not as sensitive to the budget as the exact algorithms. For the exact the algorithms, while the root gap decreases with larger budget values, the solution time tends to increase, especially for the uncorrelated instances.

\begin{table}
	\footnotesize
	\centering
	\caption{Performance of the binary search for varying budgets.}
	\label{tb:ws_budget}
	\setlength{\tabcolsep}{2.5pt} 
	\begin{tabular}{c|rrr|rrr}
		\hline \hline
		&  \multicolumn{3}{c|}{ Uncorrelated } & \multicolumn{3}{c}{Correlated}  \\
		\hline
		$\eta$ & iter & time & gap & iter & time & gap \\ 
		\hline
		2 & 3.0 & 0.30 & 0.00 &  4.0 & 1.33 & 0.27 \\
		4 & 2.8 & 0.36 & 1.06  & 4.0 & 2.68 & 0.44 \\
		6 & 2.8 & 0.38 & 1.02  & 4.0 & 1.70 & 0.54 \\
		8 & 3.0 & 0.35 & 0.17  & 4.0 & 1.69 & 0.26 \\
		10 & 3.0 & 0.36 & 0.03  & 4.0 & 1.48 & 1.50 \\
		\hline
		\textbf{avg} 
		&$\BD{2.92}$ & $\BD{0.35}$ & $\BD{0.46}$ & $\BD{4.0}$ & $\BD{1.78}$ & $\BD{0.60}$ \\
		\hline \hline
	\end{tabular}
\end{table}

\begin{table}[t]
	\footnotesize
	\centering
	\caption{Performance of b\&b and b\&c for varying budgets.}
	\label{tb:uncorr_budget}
	\setlength{\tabcolsep}{2pt} 
	\begin{tabular}{c|rrrrr|rrrrrr}
		\hline \hline
				 \rule{0pt}{10pt} 	& \multicolumn{11}{c}{ Uncorrelated instances }  \\ \hline
		& \multicolumn{5}{c|}{ Cplex } & \multicolumn{6}{c}{Cplex $+$ cuts} \\ 
		\hline
		$\eta$ & rgap & stime & time & egap (\#) & nodes  & cuts & rgap  & stime & time & egap (\#) & nodes  \\ 
		\hline
		2 & 24.4 & 3 & 6 & 0.0\phantom{(0)} & 680 & 104 & 7.1 & 7 & 8 & 0.0\phantom{(0)} & 18 \\
		4 & 20.0 & 469 & 676 & 0.0\phantom{(0)} & 56,937 & 579 & 11.4 & 48 & 102 & 0.0\phantom{(0)} & 4850 \\
		6 & 18.7 & 888 & 1,412 & 0.1(1) & 123,930 & 626 & 11.5 & 94 & 124 & 0.0\phantom{(0)} & 6457 \\
		8 & 18.6 & 343 & 1,618 & 0.0\phantom{(0)} & 121,157 & 563 & 12.5 & 167 & 327 & 0.0\phantom{(0)} & 25,062 \\
		10 & 18.1 & 898 & 1,705 & 1.1(1) & 121,624 & 523 & 12.3 & 225 & 300 & 0.0\phantom{(0)} & 21,130 \\
		\hline
		{\textbf{avg}  \rule{0pt}{10pt} } &
		$\BD{20.0}$ & $\BD{520}$ & $\BD{1083}$ & $\BD{0.2(2)}$ & $\BD{84,865}$ & $\BD{479}$ & $\BD{10.9}$ & $\BD{108}$ & $\BD{172}$ & $\BD{0.0\phantom{(0)}}$ & $\BD{11,503}$ \\
		\hline 
		\rule{0pt}{10pt} 	& \multicolumn{11}{c}{ Correlated instances }  \\ \hline
		& \multicolumn{5}{c|}{ Cplex } & \multicolumn{6}{c}{Cplex $+$ cuts} \\ 
		\hline
		$\eta$ & rgap & stime & time & egap (\#) & nodes  & cuts & rgap  & stime & time & egap (\#) & nodes  \\ 
		\hline
		2 & 25.4 & 6 & 11 & 0.0\phantom{(0)} & 1,325 & 128 & 19.8 & 3 & 9 & 0.0\phantom{(0)} & 125 \\
		4 & 16.9 & 75 & 314 & 0.0\phantom{(0)} & 30,536 & 400 & 13.8 & 48 & 81 & 0.0\phantom{(0)} & 3,567 \\
		6 & 13.4 & 86 & 246 & 0.0\phantom{(0)} & 32,990 & 408 & 10.6 & 43 & 94 & 0.0\phantom{(0)} & 7,574 \\
		8 & 13.2 & 105 & 199 & 0.0\phantom{(0)} & 27,514 & 386 & 10.9 & 38 & 62 & 0.0\phantom{(0)} & 4,997 \\
		10 & 12.3 & 37 & 129 & 0.0\phantom{(0)} & 20,880 & 330 & 9.8 & 33 & 35 & 0.0\phantom{(0)} & 1,870 \\
		\hline
		{\textbf{avg}  \rule{0pt}{10pt} } &
		$\BD{16.2}$ & $\BD{61}$ & $\BD{180}$ & $\BD{0.0\phantom{(0)}}$ & $\BD{22,649}$ & $\BD{330}$ & $\BD{13.0}$ & $\BD{33}$ & $\BD{56}$ & $\BD{0.0\phantom{(0)}}$ & $\BD{3,627}$ \\
		\hline \hline
	\end{tabular}
\end{table}

\ignore{
\begin{table}[t]
	\footnotesize
	\centering
	\caption{Correlated case for varying budgets: $20 \times 20$.}
	\label{tb:corr_budget}
	\setlength{\tabcolsep}{2pt} 
	\begin{tabular}{c|rrrrr|rrrrrr}
		\hline \hline
		& \multicolumn{5}{c|}{ Cplex } & \multicolumn{6}{c}{Cplex $+$ cuts} \\ 
		\hline
		$\eta$ & rgap & stime & time & egap (\#) & nodes  & cuts & rgap  & stime & time & egap (\#) & nodes  \\ 
		\hline
		2 & 25.4 & 6 & 11 & 0.0\phantom{(0)} & 1,325 & 128 & 19.8 & 3 & 9 & 0.0\phantom{(0)} & 125 \\
		4 & 16.9 & 75 & 314 & 0.0\phantom{(0)} & 30,536 & 400 & 13.8 & 48 & 81 & 0.0\phantom{(0)} & 3,567 \\
		6 & 13.4 & 86 & 246 & 0.0\phantom{(0)} & 32,990 & 408 & 10.6 & 43 & 94 & 0.0\phantom{(0)} & 7,574 \\
		8 & 13.2 & 105 & 199 & 0.0\phantom{(0)} & 27,514 & 386 & 10.9 & 38 & 62 & 0.0\phantom{(0)} & 4,997 \\
		10 & 12.3 & 37 & 129 & 0.0\phantom{(0)} & 20,880 & 330 & 9.8 & 33 & 35 & 0.0\phantom{(0)} & 1,870 \\
		\hline
		{\textbf{avg}  \rule{0pt}{10pt} } &
		$\BD{16.2}$ & $\BD{61}$ & $\BD{180}$ & $\BD{0.0\phantom{(0)}}$ & $\BD{22,649}$ & $\BD{330}$ & $\BD{13.0}$ & $\BD{33}$ & $\BD{56}$ & $\BD{0.0\phantom{(0)}}$ & $\BD{3,627}$ \\
		\hline \hline
	\end{tabular}
\end{table}
}


Next, we present the experiments performed to test the effect of the interdiction cost parameter $\alpha$. 
New instances with $1-\epsilon = 0.95$ and grid size $20 \times 20$ are generated with
varying $\alpha_a$ drawn from integral uniform $[r, 3r]$ for $r \in \{5, 10, 15, 20, 25\}$. 
To keep the relative scales of the parameters consistent with the previous experiments,
the budget parameter $\beta$ is set to $\frac{\bar{\alpha} Y}{4}$. 
Tables \ref{tb:ws_cost} -- \ref{tb:uncorr_cost} summarize the results. The optimality gaps for the binary search algorithm
are higher for these experiments with similar run times. Both the binary search and the exact algorithms appear to be insensitive to the changes in the interdiction cost in our experiments.

\begin{table}
	\footnotesize
	\centering
	\caption{Performance of the binary search for varying interdiction costs.}
	\label{tb:ws_cost}
	\setlength{\tabcolsep}{2.5pt} 
	\begin{tabular}{c|rrr|rrr}
		\hline \hline
		&  \multicolumn{3}{c|}{ Uncorrelated } & \multicolumn{3}{c}{Correlated}  \\
		\hline
		$r$ & iter & time & gap & iter & time & gap \\ 
		\hline
		5 & 3.0 & 0.39 & 3.93 &  4.0 & 2.15 & 0.22 \\
		10 & 2.8 & 0.42 & 1.94 &  4.0 & 2.05 & 4.29 \\
		15 & 2.8 & 0.41 & 3.43 &  4.0 & 2.17 & 0.48 \\
		20 & 2.8 & 0.41 & 1.20 & 4.0 & 2.24 & 0.80 \\
		25 & 2.8 & 0.44 & 1.76 &  4.0 & 2.22 & 0.63 \\
		\hline
		\textbf{avg} 
		&$\BD{2.84}$ & $\BD{0.41}$ & $\BD{2.45}$ & $\BD{4.0}$ & $\BD{2.17}$ & $\BD{1.29}$ \\
		\hline \hline
	\end{tabular}
\end{table}

\begin{table}[t]
	\footnotesize
	\centering
	\caption{Performance of b\&b and b\&c for varying interdiction costs.}
	\label{tb:uncorr_cost}
	\setlength{\tabcolsep}{2pt} 
	\begin{tabular}{c|rrrrr|rrrrrr}
		\hline \hline
		 \rule{0pt}{10pt} 	& \multicolumn{11}{c}{ Uncorrelated instances }  \\ \hline
		& \multicolumn{5}{c|}{ Cplex } & \multicolumn{6}{c}{Cplex $+$ cuts} \\ 
		\hline
		$r$  & rgap & stime & time & egap (\#) & nodes  & cuts & rgap  & stime & time & egap (\#) & nodes  \\ 
		\hline
		5 &  22.4 & 969 & 2,405 & 0.6(2) & 299,432 & 705 & 15.1 & 296 & 428 & 0.0\phantom{(0)} & 26,404 \\
		10 &  22.5 & 1,299 & 2,552 & 1.8(2) & 282,257 & 725 & 15.4 & 802 & 944 & 0.0\phantom{(0)} & 56,725 \\
		15 &  22.4 & 1,611 & 2,383 & 3.3(3) & 267,595 & 686 & 15.3 & 815 & 1,319 & 0.5(1) & 72,508 \\
		20 &  22.2 & 1,436 & 2,905 & 4.4(4) & 279,349 & 704 & 14.9 & 336 & 775 & 0.0\phantom{(0)} & 44,972 \\
		25 &  22.3 & 1,502 & 2,905 & 4.2(4) & 339,789 & 691 & 15.2 & 576 & 985 & 0.2(1) & 57,971 \\
		\hline
		\textbf{avg} \rule{0pt}{10pt} &
		$\BD{22.4}$ & $\BD{1363}$ & $\BD{2630}$ & $\BD{2.8(15)}$ & $\BD{293,684}$ & $\BD{702}$ & $\BD{15.2}$ & $\BD{565}$ & $\BD{890}$ & $\BD{0.1(2)}$ & $\BD{51,716}$ \\ \hline
			\rule{0pt}{10pt}	& \multicolumn{11}{c}{ Correlated instances }  \\ \hline
		& \multicolumn{5}{c|}{ Cplex } & \multicolumn{6}{c}{Cplex $+$ cuts} \\ 
		\hline
		$r$  & rgap & stime & time & egap (\#) & nodes  & cuts & rgap  & stime & time & egap (\#) & nodes  \\ 
		\hline
		5 &  16.7 & 302 & 887 & 0.0\phantom{(0)} & 966,71 & 449 & 13.9 & 72 & 135 & 0.0\phantom{(0)} & 10,406 \\
		10 &  17.2 & 644 & 1,020 & 0.0\phantom{(0)} & 118,975 & 442 & 14.4 & 163 & 209 & 0.0\phantom{(0)} & 17,283 \\
		15 &  17.0 & 175 & 1,359 & 0.5(1) & 124,748 & 434 & 14.2 & 91 & 157 & 0.0\phantom{(0)} & 13,486 \\
		20 &  16.9 & 800 & 1,434 & 0.0\phantom{(0)} & 140,953 & 418 & 14.0 & 57 & 263 & 0.0\phantom{(0)} & 21,663 \\
		25 &  16.7 & 379 & 1,276 & 0.0\phantom{(0)} & 140,621 & 440 & 13.8 & 108 & 200 & 0.0\phantom{(0)} & 16,675 \\
		\hline
		\textbf{avg} \rule{0pt}{10pt} & 
		$\BD{16.9}$ & $\BD{460}$ & $\BD{1195}$ & $\BD{0.1(1)}$ & $\BD{124393}$ & $\BD{436}$ & $\BD{14.0}$ & $\BD{98}$ & $\BD{193}$ & $\BD{0.0\phantom{(0)}}$ & $\BD{15,902}$ \\
		\hline \hline
	\end{tabular}
\end{table}
\ignore{
\begin{table}[t]
	\footnotesize
	\centering
	\caption{Correlated case for varying interdiction costs.}
	\label{tb:corr_cost}
	\setlength{\tabcolsep}{2pt} 
	\begin{tabular}{c|rrrrr|rrrrrr}
		\hline \hline
		& \multicolumn{5}{c|}{ Cplex } & \multicolumn{6}{c}{Cplex $+$ cuts} \\ 
		\hline
		$r$  & rgap & stime & time & egap (\#) & nodes  & cuts & rgap  & stime & time & egap (\#) & nodes  \\ 
		\hline
		5 &  16.7 & 302 & 887 & 0.0\phantom{(0)} & 966,71 & 449 & 13.9 & 72 & 135 & 0.0\phantom{(0)} & 10,406 \\
		10 &  17.2 & 644 & 1,020 & 0.0\phantom{(0)} & 118,975 & 442 & 14.4 & 163 & 209 & 0.0\phantom{(0)} & 17,283 \\
		15 &  17.0 & 175 & 1,359 & 0.5(1) & 124,748 & 434 & 14.2 & 91 & 157 & 0.0\phantom{(0)} & 13,486 \\
		20 &  16.9 & 800 & 1,434 & 0.0\phantom{(0)} & 140,953 & 418 & 14.0 & 57 & 263 & 0.0\phantom{(0)} & 21,663 \\
		25 &  16.7 & 379 & 1,276 & 0.0\phantom{(0)} & 140,621 & 440 & 13.8 & 108 & 200 & 0.0\phantom{(0)} & 16,675 \\
		\hline
		\textbf{avg} \rule{0pt}{10pt} & 
		$\BD{16.9}$ & $\BD{460}$ & $\BD{1195}$ & $\BD{0.1(1)}$ & $\BD{124393}$ & $\BD{436}$ & $\BD{14.0}$ & $\BD{98}$ & $\BD{193}$ & $\BD{0.0\phantom{(0)}}$ & $\BD{15,902}$ \\
		\hline \hline
	\end{tabular}
\end{table}
}


Finaly, we test the performance of the binary search algorithm for larger grid sizes up to $100 \times 100$
to see how it scales up. Five instances of each size are generated as in our original set of instances. The exact algorithms are not run for these large instances; therefore, the gap is computed against the convex relaxation of the problem and, hence, it provides an upper bound on the optimality gap.
\autoref{tb:ws_large} reports the number of iterations, the time spent for the algorithm, and the percentage integrality gap, that is the gap between the upper bound found by the algorithm and the lower bound from the convex relaxation. 

Observe that the $100 \times 100$ instances have about 20,000 arcs. The correlated instances for this size could not be run due to memory limit. For the $20 \times 20$ instances, the reported upper bounds 20.73\% and 17.31\% on the optimality gap should be compared with the actual optimality gaps 1.06\% and 0.44\% in Table~\ref{tab:ws}. The large difference between the exact gap in Table~\ref{tab:ws} and igap in Table~\ref{tb:ws_large} is indicative of poor lower bounds from the convex relaxations, rather than poor upper bounds. The binary search algorithm converges in a small number of iterations for these large instances as well; however, solving quadratic 0-1 optimization problems at each iteration takes significantly longer time.

\begin{table}
	\footnotesize
	\centering
	\caption{Performance of the binary local search for larger networks.}
	\label{tb:ws_large}
	\setlength{\tabcolsep}{2.5pt} 
	\begin{tabular}{c|rrr|rrr}
		\hline \hline
		&  \multicolumn{3}{c|}{ Uncorrelated } & \multicolumn{3}{c}{Correlated}  \\
		\hline
		$p \times q$  & iter & time & igap & iter & time & igap \\ 
		\hline
		$20 \times 20$   & 2.8 & 0.36 & 20.73   & 4.0 & 2.68 & 17.31\\
		$40 \times 40$  & 3.0 & 5.33 & 26.15   & 5.0 & 80.47 & 11.81\\
		$60 \times 60$  & 3.2 & 35.66 & 27.09   & 6.0 & 502.14 & 10.12\\
		$80 \times 80$  & 3.6 & 141.96 & 27.31  & 6.0 & 3,199.11 & 8.53\\
		$100 \times 100$  & 10.2 & 4,991.3 & 31.08  & - & - & - \\
		\hline
		\textbf{avg} 
		& $\BD{4.6}$ & $\BD{1,034.92}$ & $\BD{26.47}$ & $\BD{5.3}$ & $\BD{945.97}$ & $\BD{11.94}$ \\
		\hline \hline
	\end{tabular}
\end{table}

}

\ignore{

\begin{table}
	\footnotesize
	\centering
	\caption{Performance of the binary local search for larger networks.}
	\label{tb:ws_large}
	\setlength{\tabcolsep}{1pt} 
	\begin{tabular}{c|rrrr|rrrr}
		\hline \hline
		&  \multicolumn{4}{c|}{ Uncorrelated } & \multicolumn{4}{c}{Correlated}  \\
		\hline
		$p \times q$ & time & iter & algtime & igap & time & iter & algtime & igap \\ 
		\hline
		$20 \times 20$  & 1.51 & 2.8 & 0.36 & 20.73  & 1.9 & 4.0 & 2.16 & 17.31\\
		$40 \times 40$ & 59.03 & 3.0 & 5.33 & 26.15  & 121.19 & 5.0 & 80.47 & 11.81\\
		$60 \times 60$ & 671.88 & 3.2 & 35.66 & 27.09  & 1,322.45 & 6.0 & 502.14 & 10.12\\
		$80 \times 80$ & 3,398.32 & 3.6 & 141.96 & 27.31  & 3,812.97 & 6.0 & 3,199.11 & 8.53\\
		$100 \times 100$ & 12,899.06 & 10.2 & 4,991.3 & 31.08  & 14,284.36 & - & - & - \\
		\hline
		\textbf{avg} 
		& $\BD{3,405.96}$ & $\BD{4.6}$ & $\BD{1,034.92}$ & $\BD{26.47}$ & $\BD{1,314.62}$ & $\BD{5.3}$ & $\BD{945.97}$ & $\BD{11.94}$ \\
		\hline \hline
	\end{tabular}
\end{table}
}

\exclude{ 
	\begin{table}
		\footnotesize
		\centering
		\caption{Performance of the binary local search for larger networks.}
		\label{tb:ws_large}
		\setlength{\tabcolsep}{1pt} 
		\begin{tabular}{c|rr|rr}
			\hline \hline
			&  \multicolumn{2}{c|}{ Uncorrelated } & \multicolumn{2}{c}{Correlated}  \\
			\hline
			$Y$ & iter & time & iter & time  \\ 
			\hline
			40 & 3.0 & 5.48 &  5.0 & 67.45 \\
			50 & 3.0 & 15.43 &  5.2 & 216.50 \\
			60 & 3.2 & 35.98 &  6.0 & 528.78 \\
			70 & 3.2 & 65.75 & 6.0 & 1446.10 \\
			\hline
			\textbf{avg} 
			&$\BD{3.1}$ & $\BD{30.66}$ & $\BD{5.5}$ & $\BD{564.71}$  \\
			\hline \hline
		\end{tabular}
	\end{table}

	\begin{table}
		\footnotesize
		\centering
		\caption{Performance of the binary local search for larger networks.}
		\label{tb:ws_large}
		\setlength{\tabcolsep}{1pt} 
		\begin{tabular}{c|rr|rr}
			\hline \hline
			&  \multicolumn{2}{c|}{ Uncorrelated } & \multicolumn{2}{c}{Correlated}  \\
			\hline
			$Y$ & iter & time & iter & time  \\ 
			\hline
			40 & 3.0 & 5.48 &  5.0 & 67.45 \\
			50 & 3.0 & 15.43 &  5.2 & 216.50 \\
			60 & 3.2 & 35.98 &  6.0 & 528.78 \\
			70 & 3.2 & 65.75 & 6.0 & 1446.10 \\
			80 & 3.6 & 142.40 & - & - \\ 
			\hline
			\textbf{avg} 
			&$\BD{3.2}$ & $\BD{53.01}$ & $\BD{5.5}$ & $\BD{564.71}$  \\
			\hline \hline
		\end{tabular}
\end{table}
}


\section{Conclusion} \label{sec:conclusion}

In this paper we introduce a successive quadratic optimization procedure embedded in a bisection search for finding high quality solutions to discrete mean-risk minimization problems with a conic quadratic objective. The search algorithm is applied on a non-convex upper-bounding function that provides tight values at local minima.     
Computations with the network interdiction problem with stochastic capacities indicate that 
the proposed method finds solutions within 1--\rep{4}{2}\% optimal in a small fraction of the time required by exact branch-and-bound and branch-and-cut algorithms. Although we demonstrate the approach for the network interdiction problem with stochastic capacities, since method is agnostic to the constraints of the problem, it can be applied to any \ins{0-1} optimization  problem with a mean-risk objective.

\section*{Acknowledgement} This research is supported, in part, by grant FA9550-10-1-0168 from the Office
of the Assistant Secretary of Defense for Research and Engineering.

\bibliographystyle{plainnat} 
\bibliography{interdiction}

\begin{thebibliography}{36}
\providecommand{\natexlab}[1]{#1}
\providecommand{\url}[1]{\texttt{#1}}
\expandafter\ifx\csname urlstyle\endcsname\relax
  \providecommand{\doi}[1]{doi: #1}\else
  \providecommand{\doi}{doi: \begingroup \urlstyle{rm}\Url}\fi

\bibitem[Ahmed(2006)]{ahmed:06}
S.~Ahmed.
\newblock Convexity and decomposition of mean-risk stochastic programs.
\newblock \emph{Mathematical Programming}, 106:\penalty0 433--446, 2006.

\bibitem[Ahmed and Atamt{\"u}rk(2011)]{AA:utility}
S.~Ahmed and A.~Atamt{\"u}rk.
\newblock Maximizing a class of submodular utility functions.
\newblock \emph{Mathematical Programming}, 128:\penalty0 149--169, 2011.

\bibitem[Alizadeh(1995)]{Alizadeh1995}
F.~Alizadeh.
\newblock Interior point methods in semidefinite programming with applications
  to combinatorial optimization.
\newblock \emph{SIAM Journal on Optimization}, 5:\penalty0 13--51, 1995.

\bibitem[Alizadeh and Goldfarb(2003)]{Alizadeh2003}
F.~Alizadeh and D.~Goldfarb.
\newblock Second-order cone programming.
\newblock \emph{Mathematical Programming}, 95:\penalty0 3--51, 2003.

\bibitem[Atamt{\"u}rk and G{\'o}mez(2017)]{AG:max-util}
A.~Atamt{\"u}rk and A.~G{\'o}mez.
\newblock Maximizing a class of utility functions over the vertices of a
  polytope.
\newblock \emph{Operations Research}, 65:\penalty0 433--445, 2017.

\bibitem[Atamt\"urk and G\'omez(2017)]{AG:poly}
A.~Atamt\"urk and A.~G\'omez.
\newblock Submodularity in conic quadratic mixed 0-1 optimization.
\newblock \emph{arXiv preprint arXiv:1705.05918}, 2017.
\newblock BCOL Research Report 16.02, IEOR, UC Berkeley.

\bibitem[Atamt{\"u}rk and G\'{o}mez(2017)]{AG:simplex-qp}
A.~Atamt{\"u}rk and A.~G\'{o}mez.
\newblock Simplex {QP}-based methods for minimizing a conic quadratic objective
  over polyhedra.
\newblock \emph{arXiv preprint arXiv:1706.05795}, 2017.
\newblock BCOL Research Report 17.02, IEOR, UC Berkeley.

\bibitem[Atamt\"urk and G\'omez(2018)]{AG:m-matrix}
A.~Atamt\"urk and A.~G\'omez.
\newblock Strong formulations for quadratic optimization with m-matrices and
  indicator variables.
\newblock \emph{Mathematical Programming}, 170:\penalty0 141--176, 2018.

\bibitem[Atamt\"urk and Jeon(2017)]{AJ:lifted-polymatroid}
A.~Atamt\"urk and H.~Jeon.
\newblock Lifted polymatroid inequalities for mean-risk optimization with
  indicator variables.
\newblock \emph{arXiv preprint arXiv:1705.05915}, 2017.
\newblock BCOL Research Report 17.01, IEOR, UC Berkeley.

\bibitem[Atamt{\"u}rk and Narayanan(2007)]{AN:conicmir:ipco}
A.~Atamt{\"u}rk and V.~Narayanan.
\newblock Cuts for conic mixed integer programming.
\newblock In M.~Fischetti and D.~P. Williamson, editors, \emph{Proceedings of
  the 12th International IPCO Conference}, pages 16--29, 2007.

\bibitem[Atamt{\"u}rk and Narayanan(2008)]{atamturk2008polymatroids}
A.~Atamt{\"u}rk and V.~Narayanan.
\newblock Polymatroids and mean-risk minimization in discrete optimization.
\newblock \emph{Operations Research Letters}, 36:\penalty0 618--622, 2008.

\bibitem[Atamt{\"u}rk and Narayanan(2009)]{Atamturk2009}
A.~Atamt{\"u}rk and V.~Narayanan.
\newblock The submodular 0-1 knapsack polytope.
\newblock \emph{Discrete Optimization}, 6:\penalty0 333--344, 2009.

\bibitem[Ben-Tal and Nemirovski(2000)]{BN:robust-mp}
A.~Ben-Tal and A.~Nemirovski.
\newblock Robust solutions of linear programming problems contaminated with
  uncertain data.
\newblock \emph{Mathematical Programming}, 88:\penalty0 411--424, 2000.

\bibitem[Ben-Tal and Nemirovski(2001)]{BTN:ModernOptBook}
A.~Ben-Tal and A.~Nemirovski.
\newblock \emph{Lectures on modern convex optimization: analysis, algorithms,
  and engineering applications}.
\newblock SIAM, 2001.

\bibitem[Ben-Tal et~al.(2009)Ben-Tal, El~Ghaoui, and Nemirovski]{RO-book}
A.~Ben-Tal, L.~El~Ghaoui, and A.~Nemirovski.
\newblock \emph{Robust optimization}.
\newblock Princeton University Press, 2009.

\bibitem[Bertsimas and Popescu(2005)]{bertsimas.popescu:05}
D.~Bertsimas and I.~Popescu.
\newblock Optimal inequalities in probability theory: A convex optimization
  approach.
\newblock \emph{SIAM Journal on Optimization}, 15:\penalty0 780--804, 2005.

\bibitem[Bienstock and Muratore(2000)]{BM:surv-cuts}
D.~Bienstock and G.~Muratore.
\newblock Strong inequalities for capacitated survivable network design
  problems.
\newblock \emph{Mathematical Programming}, 89:\penalty0 127--147, 2000.

\bibitem[Birge and Louveaux(2011)]{Birge:SPbook}
J.~R. Birge and F.~Louveaux.
\newblock \emph{Introduction to Stochastic Programming}.
\newblock Springer, 2011.

\bibitem[{\c{C}}ezik and Iyengar(2005)]{CI:cmip}
M.~T. {\c{C}}ezik and G.~Iyengar.
\newblock Cuts for mixed 0-1 conic programming.
\newblock \emph{Mathematical Programming}, 104:\penalty0 179--202, 2005.

\bibitem[Cormican et~al.(1998)Cormican, Morton, and
  Wood]{cormican1998stochastic}
K.~J. Cormican, D.~P. Morton, and R.~K. Wood.
\newblock Stochastic network interdiction.
\newblock \emph{Operations Research}, 46:\penalty0 184--197, 1998.

\bibitem[Ghaoui et~al.(2003)Ghaoui, Oks, and Oustry]{ghaoui.etal:03}
L.~E. Ghaoui, M.~Oks, and F.~Oustry.
\newblock Worst-case value-at-risk and robust portfolio optimization: A conic
  programming approach.
\newblock \emph{Operations Research}, 51:\penalty0 543--556, 2003.

\bibitem[Hassin and Tamir(1989)]{hassin1989maximizing}
R.~Hassin and A.~Tamir.
\newblock Maximizing classes of two-parameter objectives over matroids.
\newblock \emph{Mathematics of Operations Research}, 14:\penalty0 362--375,
  1989.

\bibitem[Held et~al.(2005)Held, Hemmecke, and Woodruff]{HHW:stoc-interdiction}
H.~Held, R.~Hemmecke, and D.~L. Woodruff.
\newblock A decomposition algorithm applied to planning the interdiction of
  stochastic networks.
\newblock \emph{Naval Research Logistics}, 52:\penalty0 321--328, 2005.

\bibitem[Hiriart-Urruty and Lemar{\'e}chal(2013)]{hiriart2013convex}
J.-B. Hiriart-Urruty and C.~Lemar{\'e}chal.
\newblock \emph{Convex analysis and minimization algorithms I: Fundamentals}.
\newblock Springer, 2013.

\bibitem[Ishii et~al.(1981)Ishii, Shiode, Nishida, and Namasuya]{Ishii1981}
H.~Ishii, S.~Shiode, T.~Nishida, and Y.~Namasuya.
\newblock Stochastic spanning tree problem.
\newblock \emph{Discrete Applied Mathematics}, 3:\penalty0 263--273, 1981.

\bibitem[Janjarassuk and Linderoth(2008)]{janjarassuk2008reformulation}
U.~Janjarassuk and J.~Linderoth.
\newblock Reformulation and sampling to solve a stochastic network interdiction
  problem.
\newblock \emph{Networks}, 52:\penalty0 120--132, 2008.

\bibitem[Lei et~al.(2018)Lei, Shen, and Song]{LSS:interdiction}
Xiao Lei, Siqian Shen, and Yongjia Song.
\newblock Stochastic maximum flow interdiction problems under heterogeneous
  risk preferences.
\newblock \emph{Computers \& Operations Research}, 90:\penalty0 97--109, 2018.

\bibitem[Lobo et~al.(1998)Lobo, Vandenberghe, Boyd, and Lebret]{Lobo1998}
M.~S. Lobo, L.~Vandenberghe, S.~Boyd, and H.~Lebret.
\newblock Applications of second-order cone programming.
\newblock \emph{Linear Algebra \& its Applications}, 284:\penalty0 193--228,
  1998.

\bibitem[Nesterov and Todd(1998)]{Nesterov1998}
Y.~E Nesterov and M.~J. Todd.
\newblock Primal-dual interior-point methods for self-scaled cones.
\newblock \emph{SIAM Journal on Optimization}, 8:\penalty0 324--364, 1998.

\bibitem[Nikolova(2009)]{nikolova2009strategic}
E.~Nikolova.
\newblock \emph{Strategic Algorithms}.
\newblock PhD thesis, Massachusetts Institute of Technology, 2009.

\bibitem[Rajan and Atamt{\"u}rk(2002)]{RA:review}
D.~Rajan and A.~Atamt{\"u}rk.
\newblock Survivable network design : {R}outing of flows and slacks.
\newblock In G.~Anandalingam and S.~Raghavan, editors, \emph{Telecommunications
  Network Design and Management}, pages 65--81. Kluwer Academic Publishers,
  2002.

\bibitem[Royset and Wood(2007)]{royset2007solving}
J.~O. Royset and R.~K. Wood.
\newblock Solving the bi-objective maximum-flow network-interdiction problem.
\newblock \emph{INFORMS Journal on Computing}, 19:\penalty0 175--184, 2007.

\bibitem[Schrijver(2003)]{book:S:co}
Alexander Schrijver.
\newblock \emph{Combinatorial optimization: polyhedra and efficiency},
  volume~24.
\newblock Springer Science \& Business Media, 2003.

\bibitem[Shen et~al.(2003)Shen, Coullard, and Daskin]{shen2003joint}
Z.-J.~M. Shen, C.~Coullard, and M.~S. Daskin.
\newblock A joint location-inventory model.
\newblock \emph{Transportation Science}, 37:\penalty0 40--55, 2003.

\bibitem[Smith et~al.(2013)Smith, Prince, and Geunes]{Smith2013}
J.~C. Smith, M.~Prince, and J.~Geunes.
\newblock Modern network interdiction problems and algorithms.
\newblock In P.~M. Pardalos, D.-Z. Du, and R.~L. Graham, editors,
  \emph{Handbook of Combinatorial Optimization}, pages 1949--1987. Springer,
  2013.

\bibitem[Wood(1993)]{wood1993deterministic}
R.~K. Wood.
\newblock Deterministic network interdiction.
\newblock \emph{Mathematical and Computer Modelling}, 17:\penalty0 1--18, 1993.

\end{thebibliography}

\end{document}